\newcommand{\Cay}{\mathrm{Cay}}
\newcommand{\Gal}{\mathrm{Gal}}
\newcommand{\Aut}{\mathrm{Aut}}
\newcommand{\Inv}{\mathrm{Inv}}
\newcommand{\sym}{\mathrm{Sym}}
\newcommand{\D}{\Delta}
\newtheorem{theorem}{Theorem}[section]
\newtheorem{lemma}[theorem]{Lemma}
\newtheorem*{lem-no}{Lemma}
\newtheorem{cor}[theorem]{Corollary}
\newtheorem{example}{Example}
\theoremstyle{definition}
\numberwithin{equation}{section} 
\def\qed{\hfill$\Box$\vspace{12pt}}
\long\def\delete#1{}
\begin{document}
\title {Algebraic degrees of $n$-Cayley digraphs over abelian groups}

\author{Hao Li$^{a,b,c}$,~Xiaogang Liu$^{a,b,c,}$\thanks{Supported by the National Natural Science Foundation of China (No. 12371358) and the Guangdong Basic and Applied Basic Research Foundation (No. 2023A1515010986).}~$^,$\thanks{ Corresponding author. Email addresses: haoli1111@mail.nwpu.edu.cn, xiaogliu@nwpu.edu.cn.}~
\\[2mm]
{\small $^a$School of Mathematics and Statistics,}\\[-0.8ex]
{\small Northwestern Polytechnical University, Xi'an, Shaanxi 710072, P.R.~China}\\
{\small $^b$Research \& Development Institute of Northwestern Polytechnical University in Shenzhen,}\\[-0.8ex]
{\small Shenzhen, Guandong 518063, P.R. China}\\
{\small $^c$Xi'an-Budapest Joint Research Center for Combinatorics,}\\[-0.8ex]
{\small Northwestern Polytechnical University, Xi'an, Shaanxi 710129, P.R. China}\\
}

\date{}

\openup 0.5\jot
\maketitle

\begin{abstract}
A digraph is called an $n$-Cayley digraph if its automorphism group has an $n$-orbit semiregular subgroup. We determine the splitting fields of $n$-Cayley digraphs over abelian groups and compute a bound on their algebraic degrees, before applying our results on Cayley digraphs over non-abelian groups.

\smallskip

\emph{Keywords:} Splitting field; Integral graph; Algebraic degree; $n$-Cayley digraph; Cayley graph.

\emph{Mathematics Subject Classification (2010):} 05C50, 05C25
\end{abstract}

\section{Introduction}\label{intro}
A \emph{directed graph} (abbreviated to \emph{digraph}) $\Gamma$ is an ordered pair $(V(\Gamma),A(\Gamma))$, where $V(\Gamma)$ is the set of vertices and $A(\Gamma)$ is the set of arcs. Each arc is associated with an ordered pair of vertices. If an arc $a$ is associated with $(u,v)$, then $a$ is said to be directed from $u$ to $v$. The \emph{adjacency matrix} of a digraph $\Gamma$ is the matrix $\mathcal{A}(\Gamma)=[a_{u,v}]$, where rows and columns of $\mathcal{A}(\Gamma)$ are both indexed by the vertex set of $\Gamma$ and $a_{u,v}$ is the number of arcs directed from $u$ to $v$. We refer the readers to \cite{bondyandmurty} for graph theory terminology. Let $F$ be a field. Denote by $F[x]$ the ring of polynomial in one variable over $F$. Let $f(x)\in F[x]$. We say that $f(x)$ \emph{splits} over an extension field $E$ of $F$ if $f(x)$ factors completely into linear factors in $E[x]$. An extension field $K$ of $F$ is called a \emph{splitting field} of $f(x)$ over $F$ if $f(x)$ splits over $K$ and $K$ is generated by the roots of $f(x)$. Accordingly, we say that a digraph $\Gamma$
\emph{splits} over an extension field $E$ of the rational number field $\mathbb{Q}$ if the characteristic polynomial of $\mathcal{A}(\Gamma)$ splits over $E$. And an extension field $K$ of $\mathbb{Q}$ is called a \emph{splitting field} of a digraph $\Gamma$ if $K$ is a splitting field of the characteristic polynomial of $\mathcal{A}(\Gamma)$ over $\mathbb{Q}$. By the fundamental theorem of algebra, all digraphs split in the complex number field $\mathbb{C}$. In this paper, we consider a splitting field of a digraph as a subfield of $\mathbb{C}$. A digraph $\Gamma$ is called an \emph{integral graph} if eigenvalues of $\mathcal{A}(\Gamma)$ are all integers. Since a complex number is an algebraic integer if and only if it is an eigenvalue of a matrix whose entries are all integers, eigenvalues of adjacency matrices of digraphs are algebraic integers. Moreover, since an algebraic integer is rational if and only if it is an integer, a digraph is integral if and only if it splits over $\mathbb{Q}$. The algebraic degree of a graph was initially introduced in \cite{aldeg} as a generalization of the integrality of a graph. Let $K$ be a splitting field of a digraph $\Gamma$. The \emph{algebraic degree} of $\Gamma$, denoted by $\deg(\Gamma)$, is defined as $$\deg(\Gamma)=[K:\mathbb{Q}],$$
where $[K:\mathbb{Q}]$ denotes the degree of the finite field extension $K$ of $\mathbb{Q}$, namely, the dimension of $K$ as a $\mathbb{Q}$-vector space.

Let $G$ be a group and let $S$ be a subset of $G$ without the identity. The \emph{Cayley digraph} $\Cay(G,S)$ is defined to be the digraph with vertex set $G$ and arcs drawn from $g\in G$ to $h\in G$ whenever $hg^{-1}\in S$. In particular, if $G$ is a cyclic group, then $\Cay(G,S)$ is called a \emph{circulant digraph}. Moreover, if $S$ is symmetric (that is, $\forall s\in S$, $s^{-1}\in S$), then $\Cay(G,S)$ is an undirected graph, called a \emph{Cayley graph}. By a theorem of Sabidussi \cite{sabidussi}, a digraph $\Gamma$ is a Cayley digraph over a group $G$ if and only if there exists a regular subgroup of $\Aut(\Gamma)$ isomorphic to $G$, where $\Aut(\Gamma)$ denotes the group of graph automorphisms of the digraph $\Gamma$. As a natural generalization of Sabidussi's theorem, $n$-Cayley digraphs were introduced. A digraph $\Gamma$ is called an \emph{$n$-Cayley digraph} over a group $G$ if there exists an $n$-orbit semiregular subgroup of $\Aut(\Gamma)$ isomorphic to $G$. An additional motivation for studies on $n$-Cayley graphs comes from the well-known Polycirculant conjecture \cite{polycirculant1,polycirculant2} regarding existence of semiregular automorphisms in vertex-transitive digraphs. In short, this conjecture states that every vertex-transitive digraph is an $n$-Cayley graph on a cyclic group. We refer the readers to \cite{polyresult1,polyresult2,polyresult3,polyresult4,polyresult5,polyresult6} for results on this conjecture. Most of the research related to $n$-Cayley digraphs has been done for the case $n=2$ so far \cite{semibi0,semispectrum,semispectrum2,bisemi1,bisemi2,bisemi3,semibi1,bisemi4,bisemi5,bisemi6}. Note that undirected $2$-Cayley graphs are also called semi-Cayley graphs or bi-Cayley graphs. Currently, there are relatively few results on $n$-Cayley digraphs in general \cite{ncayleylaplacian1,ncayleylaplacian2,tcayleycharacteristic,normality}.



Up until now, integral Cayley graphs have been studied extensively \cite{int1,dic1,dih,int2,int3,int4}. We refer the readers to the survey \cite[Section~3]{LiuZ2022} for more results on integral Cayley graphs. Splitting fields and algebraic degrees of Cayley graphs have attracted much attention over the past few years. In \cite{aldeg1,aldeg2}, M\"{o}nius studied the splitting fields and algebraic degrees of circulant graphs. In \cite{feili}, Li extended M\"{o}nius' results to Cayley digraphs over abelian groups. In \cite{mixed}, Huang et al. determined the splitting fields and algebraic degrees of mixed Cayley graphs over abelian groups. In \cite{mixed2}, Liu et al. studied the HS-splitting fields of abelian mixed Cayley graphs. In \cite{hyper}, Sripaisan et al. studied the algebraic degrees of Cayley hypergraphs. In \cite{lulu}, Lu et al. determined the splitting fields of Cayley graphs over abelian groups and dihedral groups and computed a bound on the algebraic degrees of Cayley graphs over dihedral groups. In \cite{dicycsemidihe}, Liu et al. determined the splitting fields of Cayley graphs over dicyclic and semi-dihedral groups and computed a bound on their algebraic degrees. In \cite{quasiabelian}, Wang et al. determined the splitting fields of quasi-abelian semi-Cayley digraphs and computed a bound on the algebraic degrees of them. In \cite{2cayley}, Wu et al. determined the splitting fields of $2$-Cayley digraphs over abelian groups and computed a bound on the algebraic degrees of them.

Motivated by the foregoing results, we study splitting fields and algebraic degrees of $n$-Cayley digraphs over abelian groups. As applications, we obtain results on Cayley digraphs over non-abelian groups. In \cite{aldeg2}, M\"{o}nius raised a question, whether the results on circulant graphs in \cite{aldeg2} can be extended to Cayley graphs over non-abelian groups. There have been results on Cayley digraphs whose vertex group has an abelian subgroup with index $2$ (See \cite{2cayley}). Typical cases include generalized dihedral groups, generalized dicyclic groups and semi-dihedral groups. Our work aims to further investigate the question raised by M\"{o}nius in \cite{aldeg2} as well.

Our paper is organized as follows. In Section $\ref{pre}$, we introduce some symbols, definitions and lemmas. In Section \ref{semicayley}, we present our main results. We determine splitting fields of $n$-Cayley digraphs over abelian groups and compute a bound on their algebraic degrees. In Section \ref{applications}, we apply our results in Section \ref{semicayley} on Cayley digraphs over non-abelian groups.



\section{Preliminaries}\label{pre}
In this section, we introduce some symbols, definitions and lemmas.


\subsection{Field and Galois theory}\label{FG}

The symbols, definitions and results concerning the field and Galois theory come from \cite{michaelartin,fieldandgalois,fieldandgalois2}.

\begin{itemize}
\item Let $F\subseteq K\subseteq E$ be finite field extensions. It is known (See\cite[Page~215, Theorem~4.2]{fieldandgalois}) that
\begin{equation}\label{bino}
[E:F]=[E:K][K:F].
\end{equation}
\item Let $F\subseteq E$ be a field extension. We refer to the pair $F\subseteq E$ as the field extension $E|F$. Let $S=\{\alpha_{1},\alpha_{2},\ldots,\alpha_{k}\}$ be a subset of $E$. Denote by $F(S)=F(\alpha_{1},\alpha_{2},\ldots,\alpha_{k})$ the subfield of $E$ generated by $F$ and $S=\{\alpha_{1},\alpha_{2},\ldots,\alpha_{k}\}$.






\item Let $F$ be a field. Let $K_{1}$ and $K_{2}$ be field extensions of $F$ contained in some common extension $E$ of $F$. The \emph{composite} of $K_{1}$ and $K_{2}$, denoted by $K_{1}K_{2}$, is the subfield of $E$ generated by $K_{1}$ and $K_{2}$. It is known (See\cite[Page~448, Corollary~15.3.8]{michaelartin}) that
    \begin{equation}
    \label{compositedegree}
    [K_{1}K_{2}:F]\leq[K_{1}:F][K_{2}:F].
    \end{equation}
\item Let $F$ be a field. Let $f(x)\in F[x]$ be a polynomial of degree $n$. Let $K$ be a splitting field of $f(x)$ over $F$. It is known (See\cite[Page~29, Corollary~6]{fieldandgalois2}) that
    \begin{equation}\label{splittingdegree}
    [K:F]\leq n!,
    \end{equation}
    where $n!$ is the factorial of $n$.

\item Let $E|F$ be a field extension. The \emph{Galois group} of $E$ over $F$, denoted by $\Gal(E|F)$, is defined as
\begin{equation}\label{gal}
\Gal(E|F)=\{\sigma\in\Aut(E):\forall x\in F,\;\sigma(x)=x\},
\end{equation}
where $\Aut(E)$ denotes the group of field automorphisms of the field $E$. Let $H$ be a subgroup of $\Gal(E|F)$. The \emph{$H$-fixed subfield} of $E$, denoted by $\Inv(H)$, is defined as
\begin{equation}\label{inv}
\Inv(H)=\{x\in E:\forall\sigma\in H,\;\sigma(x)=x\}.
\end{equation}
Let $H^{(1)}$ and $H^{(2)}$ be subgroups of $\Gal(E|F)$. By definition, if $H^{(1)}\subseteq H^{(2)}$, then
\begin{equation}\label{inverse1}
\Inv(H^{(1)})\supseteq\Inv(H^{(2)}).
\end{equation}
\item Let $E|F$ be a finite Galois extension (See\cite[Page~21, Definition~2.15]{fieldandgalois2} for the definition of Galois extension). Let $K$ be an intermediate field of $E|F$, that is, $F\subseteq K\subseteq E$. Let $H$ be a subgroup of $\Gal(E|F)$. The Fundamental Theorem of Galois Theory (See\cite[Page~239]{fieldandgalois}) states
\begin{equation}\label{invgal}
    \Inv(\Gal(E|K))=K,\;
    \Gal(E|\Inv(H))=H
\end{equation}
and
\begin{equation}\label{degree}
[K:F]=\frac{|\Gal(E|F)|}{|\Gal(E|K)|}, \;  [\Inv(H):F]=\frac{|\Gal(E|F)|}{|H|}.
\end{equation}

\item We refer to the imaginary unit as $\iota=\sqrt{-1}$. Let $N\geq2$ be an integer. Set $\omega_{N}=e^{\frac{2\pi\iota}{N}}$. $\omega_{N}$ is an $N$-th root of unity. $\mathbb{Q}(\omega_{N})|\mathbb{Q}$ is a cyclotomic extension (See\cite[Page~72, Definition~7.1]{fieldandgalois2} for the definition of cyclotomic extension). Moreover, the cyclotomic extension $\mathbb{Q}(\omega_{N})|\mathbb{Q}$ is known (See\cite[Page~72, Proposition~7.2]{fieldandgalois2}) to be Galois. For convenience, set
$$
[N]=\{1,2,\ldots,N\}.
$$
It is known (See\cite[Page~75, Corollary~7.8]{fieldandgalois2}) that
$$
\Gal(\mathbb{Q}(\omega_{N})|\mathbb{Q})=\{\sigma_{j}:j\in[N],
\;\mathrm{gcd}(j,N)=1\},
$$
where $\sigma_{j}$ is determined by
$$\sigma_{j}(\omega_{N})=\omega_{N}^{j}$$
and $\mathrm{gcd}(j,N)$ denotes the greatest common divisor of $j$ and $N$. Note that
\begin{equation}\label{phin}
|\Gal(\mathbb{Q}(\omega_{N})|\mathbb{Q})|=\phi(N),
\end{equation}
where $\phi$ is the \emph{Euler's totient function} given by
\begin{align*}
\phi(k)=\left\{
\begin{array}{ll}
k\prod_{i=1}^{s}(1-\frac{1}{p_{i}}), & \text{if $k$ has prime factorisation $p_{1}^{r_{1}}p_{2}^{r_{2}}\cdots p_{s}^{r_{s}}\geq2$},\\[0.2cm]
0, & \text{if $k=1$},
\end{array}
\right.
\end{align*}
for any integer $k\geq1$.
\end{itemize}
\subsection{Representation theory of finite groups}\label{RT}

The symbols, definitions and results concerning the representation theory of finite groups come from \cite{representationtheory}.

\begin{itemize}
\item
Let $K$ be a field. Let $G$ be a finite group. Denote by $K[G]$ the collection of mappings from $G$ to $K$. Namely,
$$
K[G]=\{a\mid a:G\rightarrow K\}.
$$
Let $S\subseteq G$. The symbol $\delta_{S}\in K[G]$ is defined as
$$
\delta_{S}(g)=
\left\{
\begin{array}{ll}
1, & \text{if~}g\in S,\\[0.2cm]
0, & \text{if~}g\notin S.
\end{array}
\right.
$$
Let $a,b\in K[G]$. The addition $a+b:G\rightarrow K$ is defined by
$$
(a+b)(g)=a(g)+b(g),\quad\forall g\in G.
$$
The \emph{convolution} $a*b:G\rightarrow K$ is defined by
$$
a*b(g)=\sum_{h\in G}a(gh^{-1})b(h),\quad\forall g\in G.
$$
$K[G]$ is known (See\cite[Page~408]{fieldandgalois}) to be an \emph{algebra} over $K$, which indicates two facts about $K[G]$. Firstly, $(K[G],+,*)$ is a ring. Secondly, $K[G]$ is a $K$-vector space whose scalar multiplication is defined by
$$(k\cdot a)(g)=ka(g),\quad\forall k\in K,\;\forall a\in K[G],\;\forall g\in G.$$
The algebra $K[G]$ is called the \emph{group algebra} over $K$ of the finite group $G$.


\item Let $G$ be a finite group. Denote by $\mathbb{C}[G]$ the group algebra over $\mathbb{C}$ of $G$. It is known (See\cite[Page~54, Proposition~5.2.4]{representationtheory}) that $(\mathbb{C}[G],+,*)$ is a commutative ring if $G$ is abelian. In the following, $G$ is assumed to be an abelian group. The collection of irreducible characters of $G$, denoted by $\widehat{G}$, is called the \emph{dual group} of $G$. For $a\in\mathbb{C}[G]$ and $\chi\in\widehat{G}$, define
\begin{equation}\label{fourierco}
\widehat{a}(\chi)=\sum_{g\in G}a(g)\overline{\chi(g)},
\end{equation}
where $\overline{\;\cdot\;}$ denotes the complex conjugate. The complex numbers $\widehat{a}(\chi)$ for $\chi\in\widehat{G}$ are often called the \emph{Fourier coefficients} of $a$. For $a,b\in\mathbb{C}[G]$ and $\chi\in\widehat{G}$, we have
\begin{equation}\label{addition}
\widehat{a+b}(\chi)=\widehat{a}(\chi)+\widehat{b}(\chi).
\end{equation}
It is known (See\cite[Page 56, Theorem 5.3.6]{representationtheory}) that for $a\in\mathbb{C}[G]$ and $g\in G$,
\begin{equation}\label{fourierinv}
a(g)=\frac{1}{|G|}\sum_{\chi\in\widehat{G}}\widehat{a}(\chi)\chi(g),
\end{equation}
which is called the \emph{Fourier inversion}. Fourier transform is known (See\cite[Page~56, Theorem~5.3.8]{representationtheory}) to be a ring isomorphism, which means that for $a,b\in\mathbb{C}[G]$ and $\chi\in\widehat{G}$,
\begin{equation}\label{convolution}
\widehat{a*b}(\chi)=\widehat{a}(\chi)\widehat{b}(\chi).
\end{equation}
\end{itemize}

\begin{itemize}
\item In this paper, a ring is always assumed to contain a unit element. Let $R$ be a commutative ring. Let $n\geq1$ be an integer. Denote by $M_{n}(R)$ the \emph{ring of $n\times n$ matrices over the ring $R$}. Let $A=[a_{i,j}]_{n\times n}\in M_{n}(R)$. The \emph{determinant} of $A$, denoted by $\det\langle A\rangle$, is defined as
\begin{equation}\label{det2}
\det\langle A\rangle=\sum_{\sigma\in\sym_{[n]}}(-1)^{\mathrm{sgn}(\sigma)}\prod_{x\in[n]}a_{x,\sigma(x)},
\end{equation}
where
\begin{align*}
\mathrm{sgn}(\sigma)=\left\{
\begin{array}{ll}
1, & \text{if $\sigma$ is even},\\[0.2cm]
-1, & \text{if $\sigma$ is odd}.
\end{array}
\right.
\end{align*}
Since $\det\langle A\rangle$ is a sum of products of elements in the ring $R$, it is natural that
\begin{equation}\label{detcontained}
\det\langle A\rangle\in R.
\end{equation}
Let $X=\{\xi_{1},\xi_{2},\ldots,\xi_{k}\}$ be a subset of $[n]$ with $\xi_{1}<\xi_{2}<\cdots<\xi_{k}$. Denote by $A|_{X}$ the submatrix of $A$ obtained by striking out rows and columns indexed by $[n]\backslash X$, that is, $A|_{X}=[a_{\xi_{i},\xi_{j}}]_{k\times k}$. $A|_{X}$ is called a \emph{principal minor} of $A$ of order $k$. Denote by $U_{k}$ the collection of subsets of $[n]$ of cardinality $k$, that is,
$$
U_{k}=\{X\subseteq[n]:|X|=k\}.
$$
For $k\in[n]$, define
\begin{equation}\label{minordef}
\beta_{k}\langle A\rangle=\sum_{X\in U_{k}}\det\langle A|_{X}\rangle.
\end{equation}
Note that $\forall k\in[n]$, $\beta_{k}\langle A\rangle$ is a sum of determinants. By (\ref{detcontained}), for $k\in[n]$, it is natural that
\begin{equation}\label{betacontained}
\beta_{k}\langle A\rangle\in R.
\end{equation}
In addition, define $\beta_{0}\langle A\rangle=1$.


\item Let $n\geq1$ be an integer. Let $A\in M_{n}(\mathbb{C})$. Denote the characteristic polynomial of $A$ by
    $$\mathcal{P}\langle A\rangle(x)=\det\langle xI_{n}-A\rangle\in\mathbb{C}[x],$$
    where $I_{n}$ is the identity matrix of order $n$. It is known (See\cite[Page~196, below equation~(33)]{fieldandgalois}) that
    \begin{equation}\label{defcharacteristic}
    \mathcal{P}\langle A\rangle(x)=\sum_{k=0}^{n}(-1)^{k}\beta_{k}\langle A\rangle x^{n-k}.
    \end{equation}
    For a polynomial $f(x)\in\mathbb{C}[x]$, denote the collection of distinct roots of $f(x)$ by $\mathcal{R}(f)$, that is,
    $$
    \mathcal{R}(f)=\{x\in\mathbb{C}:f(x)=0\}.
    $$

\item Let $n\geq1$ be an integer. Let $G$ be a finite abelian group. Let $A=[a_{i,j}]_{n\times n}\in M_{n}(\mathbb{C}[G])$. Note that $A$ is a matrix whose entries are mappings from $G$ to $\mathbb{C}$ and that
    \begin{equation}\label{convolproddef}
    \det\langle A\rangle=\sum_{\sigma\in\sym_{[n]}}(-1)^{\mathrm{sgn}(\sigma)}\prod^{*}_{x\in[n]}a_{x,\sigma(x)}
    \end{equation}
    is a mapping from $G$ to $\mathbb{C}$. The symbol $\prod\limits^{*}$ in the above equation represents the convolution of a sequence of functions in $\mathbb{C}[G]$. For $\chi\in\widehat{G}$, define
    \begin{equation}\label{widetildedef}
    \widetilde{A}(\chi)=[\widehat{a_{i,j}}(\chi)]_{n\times n}.
    \end{equation}
Note that $\widetilde{A}(\chi)$ is a matrix whose entries are complex numbers. For a subset $X=\{\xi_{1},\xi_{2},\ldots,\xi_{k}\}$ of $[n]$ with $\xi_{1}<\xi_{2}<\cdots<\xi_{k}$, we have
\begin{equation}\label{AXAX}
\widetilde{A|_{X}}(\chi)=\widetilde{[a_{\xi_{i},\xi_{j}}]_{k\times k}}(\chi)=[\widehat{a_{\xi_{i},\xi_{j}}}(\chi)]_{k\times k}=\widetilde{A}(\chi)|_{X}.
\end{equation}
\end{itemize}





\begin{lemma}\label{det}
Let $n\geq1$ be an integer. Let $G$ be a finite abelian group. Let $A=[a_{i,j}]_{n\times n}\in M_{n}(\mathbb{C}[G])$. Then $\forall\chi\in\widehat{G}$,
$$\widehat{\det\langle A\rangle}(\chi)=\det\langle\widetilde{A}(\chi)\rangle.$$


\end{lemma}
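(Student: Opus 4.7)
The plan is to unwind the definition of $\det\langle A\rangle$ in the group algebra, apply the Fourier transform term by term, and recognize the result as the ordinary complex determinant of $\widetilde{A}(\chi)$. The whole argument is essentially a consequence of the fact that the Fourier transform at a fixed character $\chi\in\widehat{G}$ is a ring homomorphism $(\mathbb{C}[G],+,*)\to(\mathbb{C},+,\cdot)$, together with its compatibility with scalar multiplication.

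First I would start from equation (\ref{convolproddef}), which expresses
$$\det\langle A\rangle=\sum_{\sigma\in\sym_{[n]}}(-1)^{\mathrm{sgn}(\sigma)}\prod^{*}_{x\in[n]}a_{x,\sigma(x)}\in\mathbb{C}[G].$$
Fix $\chi\in\widehat{G}$ and apply $\widehat{\,\cdot\,}(\chi)$ to both sides. Using equation (\ref{addition}) inductively, the Fourier transform distributes over the finite sum over $\sym_{[n]}$, and it pulls the scalar coefficient $(-1)^{\mathrm{sgn}(\sigma)}$ out of each term (this scalar compatibility follows directly from the defining formula (\ref{fourierco}) of $\widehat{a}(\chi)$, since scalar multiplication in $\mathbb{C}[G]$ is pointwise).

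Next I would handle each convolution product $\prod^{*}_{x\in[n]}a_{x,\sigma(x)}$ separately. A short induction on $n$ using equation (\ref{convolution}) shows that
$$\widehat{\prod^{*}_{x\in[n]}a_{x,\sigma(x)}}(\chi)=\prod_{x\in[n]}\widehat{a_{x,\sigma(x)}}(\chi),$$
where the product on the right is the ordinary product in $\mathbb{C}$. Combining these two steps yields
$$\widehat{\det\langle A\rangle}(\chi)=\sum_{\sigma\in\sym_{[n]}}(-1)^{\mathrm{sgn}(\sigma)}\prod_{x\in[n]}\widehat{a_{x,\sigma(x)}}(\chi).$$
Finally, by the definition (\ref{widetildedef}) of $\widetilde{A}(\chi)$ and the classical Leibniz formula (\ref{det2}) for the determinant over the commutative ring $\mathbb{C}$, the right-hand side is exactly $\det\langle\widetilde{A}(\chi)\rangle$, which completes the proof.

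There is no real obstacle; the only point requiring a moment of care is verifying that the Fourier transform commutes with scalar multiplication (needed for the signs $(-1)^{\mathrm{sgn}(\sigma)}$) and that the multiplicativity property (\ref{convolution}) extends from two factors to arbitrary finite convolution products, both of which are immediate.
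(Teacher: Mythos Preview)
Your proof is correct and follows essentially the same approach as the paper: both arguments expand $\det\langle A\rangle$ via the Leibniz formula (\ref{convolproddef}), push the Fourier transform through the sum and the convolution products using (\ref{addition}) and (\ref{convolution}), and then recognize the result as $\det\langle\widetilde{A}(\chi)\rangle$ via (\ref{det2}). The only cosmetic difference is that the paper writes out the intermediate step $\sum_{g\in G}(\cdots)(g)\overline{\chi(g)}$ explicitly before collapsing it back to a Fourier coefficient, whereas you invoke additivity and scalar compatibility directly.
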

\begin{proof}
For any $\chi\in\widehat{G}$,
\begin{align*}
\widehat{\det\langle A\rangle}(\chi)
&=\sum_{g\in G}\det\langle A\rangle(g)\overline{\chi(g)}\tag{by (\ref{fourierco})}
\\&=\sum_{g\in G}(\sum_{\sigma\in\sym_{[n]}}(-1)^{\mathrm{sgn}(\sigma)}\prod^{*}_{x\in[n]}a_{x,\sigma(x)})(g)\overline{\chi(g)}\tag{by (\ref{convolproddef})}
\\&=\sum_{g\in G}\sum_{\sigma\in\sym_{[n]}}(-1)^{\mathrm{sgn}(\sigma)}(\prod^{*}_{x\in[n]}a_{x,\sigma(x)})(g)\overline{\chi(g)}
\\&=\sum_{\sigma\in\sym_{[n]}}(-1)^{\mathrm{sgn}(\sigma)}\sum_{g\in G}(\prod^{*}_{x\in[n]}a_{x,\sigma(x)})(g)\overline{\chi(g)}
\\&=\sum_{\sigma\in\sym_{[n]}}(-1)^{\mathrm{sgn}(\sigma)}\widehat{\prod_{x\in[n]}a_{x,\sigma(x)}}(\chi)\tag{by (\ref{fourierco})}
\\&=\sum_{\sigma\in\sym_{[n]}}(-1)^{\mathrm{sgn}(\sigma)}\prod_{x\in[n]}\widehat{a_{x,\sigma(x)}}(\chi)\tag{by (\ref{convolution})}
\\&=\det\langle\widetilde{A}(\chi)\rangle.\tag{by (\ref{det2})}
\end{align*}
This completes the proof.\qed
\end{proof}
\begin{cor}\label{spm}
Let $n\geq1$ be an integer. Let $G$ be a finite abelian group. Let $A=[a_{i,j}]_{n\times n}\in M_{n}(\mathbb{C}[G])$. Then $\forall k\in[n]$, $\forall\chi\in\widehat{G}$,
$$\widehat{\beta_{k}\langle A\rangle}(\chi)=\beta_{k}\langle\widetilde{A}(\chi)\rangle.$$

\end{cor}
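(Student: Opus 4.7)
The plan is to obtain Corollary \ref{spm} as a direct consequence of Lemma \ref{det} combined with the additivity of the Fourier transform and the compatibility between principal submatrices of $A$ and those of $\widetilde{A}(\chi)$ recorded in (\ref{AXAX}).

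First I would fix $k\in[n]$ and $\chi\in\widehat{G}$ and unwind the definition (\ref{minordef}) to write
$$\beta_{k}\langle A\rangle=\sum_{X\in U_{k}}\det\langle A|_{X}\rangle,$$
which is a sum of elements of $\mathbb{C}[G]$. Applying the Fourier coefficient at $\chi$ and iterating the linearity relation (\ref{addition}) yields
$$\widehat{\beta_{k}\langle A\rangle}(\chi)=\sum_{X\in U_{k}}\widehat{\det\langle A|_{X}\rangle}(\chi).$$

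Next, I would apply Lemma \ref{det} to each submatrix $A|_{X}\in M_{k}(\mathbb{C}[G])$, giving
$$\widehat{\det\langle A|_{X}\rangle}(\chi)=\det\langle\widetilde{A|_{X}}(\chi)\rangle,$$
and then invoke (\ref{AXAX}) to replace $\widetilde{A|_{X}}(\chi)$ by $\widetilde{A}(\chi)|_{X}$. Summing over $X\in U_{k}$ and recognizing the resulting expression via (\ref{minordef}) applied to the complex matrix $\widetilde{A}(\chi)$ gives
$$\sum_{X\in U_{k}}\det\langle\widetilde{A}(\chi)|_{X}\rangle=\beta_{k}\langle\widetilde{A}(\chi)\rangle,$$
which is the desired equality.

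There is no real obstacle here: the argument is purely bookkeeping, and the only point that requires any care is making sure that one interprets $\widetilde{A|_{X}}(\chi)$ correctly and appeals to (\ref{AXAX}) to commute the operations of taking a principal submatrix and applying $\widetilde{\phantom{A}}(\chi)$. Everything else reduces to linearity of the Fourier transform and the definition of $\beta_{k}$, so the proof will be short.
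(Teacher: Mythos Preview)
Your proposal is correct and follows essentially the same approach as the paper: both expand $\beta_{k}\langle A\rangle$ via (\ref{minordef}), apply additivity (\ref{addition}), invoke Lemma~\ref{det} on each $A|_{X}$, use (\ref{AXAX}) to pass to $\widetilde{A}(\chi)|_{X}$, and then reassemble via (\ref{minordef}). There is no meaningful difference between the two arguments.
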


\begin{proof}
Note that $\beta_{k}\langle A\rangle\in\mathbb{C}[G]$. For any $k\in[n]$ and any $\chi\in\widehat{G}$,
\begin{align*}
\widehat{\beta_{k}\langle A\rangle}(\chi)
&=\widehat{(\sum_{X\in U_{k}}\det\langle A|_{X}\rangle)}(\chi)\tag{by (\ref{minordef})}
\\&=\sum_{X\in U_{k}}\widehat{\det\langle A|_{X}\rangle}(\chi)\tag{by (\ref{addition})}
\\&=\sum_{X\in U_{k}}\det\langle\widetilde{A|_{X}}(\chi)\rangle\tag{by Lemma \ref{det}}
\\&=\sum_{X\in U_{k}}\det\langle\widetilde{A}(\chi)|_{X}\rangle\tag{by (\ref{AXAX})}
\\&=\beta_{k}\langle\widetilde{A}(\chi)\rangle.\tag{by (\ref{minordef})}
\end{align*}
This completes the proof.\qed
\end{proof}


\subsection{Main lemmas}\label{ML}
We introduce some symbols and lemmas which play an important role in investigating the splitting fields of $n$-Cayley digraphs over abelian groups.

Let $N\geq2$ be an integer. The \emph{ring of integers modulo $N$} is denoted by $\mathbb{Z}_{N}$. Denote the group of units of $\mathbb{Z}_{N}$ by $\mathbb{Z}_{N}^{*}$. Through this work, when a finite abelian group $G$ is given, we will frequently use some symbols which simply depends on $G$. For the sake of convenience, we give the definitions of them here, once and for all. Henceforth, given a finite abelian group $G$, we will use these symbols with no further explanations for them.

Let $G$ be a finite abelian group. Set
$$
G=\mathbb{Z}_{d_{1}}\times\mathbb{Z}_{d_{2}}\times\cdots\times\mathbb{Z}_{d_{r}},
$$
where $\times$ denotes the \emph{Cartesian product}. Set
$$
N=|G|=\prod_{i=1}^{r}d_{i}.
$$
For any field automorphism $\sigma\in\Gal(\mathbb{Q}(\omega_{N})|\mathbb{Q})$, define $l_{\sigma}\in\mathbb{Z}_{N}^{*}$ by
\begin{equation}\label{l}
\sigma(\omega_{N})=\omega_{N}^{l_{\sigma}}.
\end{equation}
Besides, for $\sigma\in\Gal(\mathbb{Q}(\omega_{N})|\mathbb{Q})$, set a group automorphism
\begin{equation}\label{eta}
\begin{split}
\eta_{\sigma}: ~&G\rightarrow G,\\
&(g_{1},g_{2},\ldots,g_{r})\mapsto(l_{\sigma}\cdot g_{1},l_{\sigma}\cdot g_{2},\ldots,l_{\sigma}\cdot g_{r}),
\end{split}
\end{equation}
where $l_{\sigma}\cdot g_{i}$ denotes the multiplication modulo $d_{i}$ for $i=1,2,\ldots,r$. One may verify that
\begin{align*}
\eta:~&\Gal(\mathbb{Q}(\omega_{N})|\mathbb{Q})\rightarrow\Aut(G)
\\&\sigma\mapsto\eta_{\sigma}
\end{align*}
is a group action. Let $v=(v_{1},v_{2},\ldots,v_{r})\in G$. For $i=1,2,\ldots,r$, set a mapping
\begin{equation}\label{cyclic}
\begin{split}
\chi_{v}^{(i)}:~&\mathbb{Z}_{d_{i}}\rightarrow\mathbb{C},\\
&g_{i}\mapsto\omega_{d_{i}}^{v_{i}g_{i}}.
\end{split}
\end{equation}
Set
\begin{equation}\label{abelian}
\begin{split}
\chi_{v}:~&G\rightarrow\mathbb{C},
\\&(g_{1},g_{2},\ldots,g_{r})\mapsto\prod_{i=1}^{r}\chi_{v}^{(i)}(g_{i}).
\end{split}
\end{equation}
It is known (See\cite[Page~47, Proposition~4.5.1]{representationtheory} and \cite[Page~55, Example~5.3.3]{representationtheory}) that the dual group
$$\widehat{G}=\{\chi_{v}:v\in G\}.$$
By (\ref{cyclic}) and (\ref{abelian}), $\forall\chi\in\widehat{G}$, $\forall g\in G$, one may verify that
\begin{equation}\label{negative}
\chi(-g)=\overline{\chi(g)},
\end{equation}
where $-g$ denotes the inverse of $g$ in $G$.

\begin{lemma}\label{etasigma}
Let $G$ be a finite abelian group. $\forall \sigma\in \Gal(\mathbb{Q}(\omega_{N})|\mathbb{Q})$, $\forall v= (v_{1},v_{2},\ldots,v_{r})$ $\in G$, $\forall  g=(g_{1},g_{2},\ldots,g_{r})\in G$, we have $\sigma(\chi_{v}(g))=\chi_{v}(\eta_{\sigma}(g))=\chi_{\eta_{\sigma}(v)}(g)$.
\end{lemma}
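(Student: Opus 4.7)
The plan is to verify the three quantities agree by direct computation, using the definitions $(\ref{l})$, $(\ref{eta})$, $(\ref{cyclic})$, $(\ref{abelian})$ and the fact that each $d_i$ divides $N = \prod_j d_j$.

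The first step is to understand how $\sigma$ acts on the lower roots of unity $\omega_{d_i}$. Since $d_i \mid N$, we may write $\omega_{d_i} = \omega_N^{N/d_i}$, and then the homomorphism property of $\sigma$ together with $(\ref{l})$ gives
\[
\sigma(\omega_{d_i}) = \sigma(\omega_N)^{N/d_i} = \omega_N^{l_\sigma \cdot N/d_i} = \omega_{d_i}^{l_\sigma}.
\]
This is the one substantive observation; after it, everything becomes formal.

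Next I would apply $\sigma$ to $\chi_v(g) = \prod_{i=1}^r \omega_{d_i}^{v_i g_i}$ (by $(\ref{cyclic})$ and $(\ref{abelian})$) and use that $\sigma$ is a ring automorphism to pull it inside the product, yielding
\[
\sigma(\chi_v(g)) = \prod_{i=1}^r \sigma(\omega_{d_i})^{v_i g_i} = \prod_{i=1}^r \omega_{d_i}^{l_\sigma v_i g_i}.
\]
Then, using $(\ref{eta})$, I compute both
\[
\chi_v(\eta_\sigma(g)) = \prod_{i=1}^r \omega_{d_i}^{v_i (l_\sigma \cdot g_i)} \qquad \text{and} \qquad \chi_{\eta_\sigma(v)}(g) = \prod_{i=1}^r \omega_{d_i}^{(l_\sigma \cdot v_i) g_i},
\]
where in each factor the exponent is reduced modulo $d_i$. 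Since $\omega_{d_i}^{d_i}=1$, the reductions modulo $d_i$ do not affect the value, and all three expressions reduce to $\prod_{i=1}^r \omega_{d_i}^{l_\sigma v_i g_i}$.

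There is essentially no obstacle here: the only mildly non-trivial point is checking that $\sigma$'s action on $\omega_{d_i}$ is raising to the power $l_\sigma$, which follows from $d_i \mid N$. The rest is unwinding definitions and using that multiplication modulo $d_i$ in the exponent of $\omega_{d_i}$ does not change the value. The proof will be a short three-line display chain with references to the appropriate numbered equations.
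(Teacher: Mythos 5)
Your proof is correct and follows essentially the same route as the paper: apply $\sigma$ factor-by-factor to $\chi_v(g)=\prod_i\omega_{d_i}^{v_ig_i}$ and match exponents against the definition of $\eta_\sigma$. The only difference is that you explicitly justify $\sigma(\omega_{d_i})=\omega_{d_i}^{l_\sigma}$ via $\omega_{d_i}=\omega_N^{N/d_i}$, a step the paper leaves implicit when it cites $(\ref{l})$; this is a worthwhile clarification but not a different argument.
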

\begin{proof}
$\forall\sigma\in\Gal(\mathbb{Q}(\omega_{N})|\mathbb{Q})$, $\forall v=(v_{1},v_{2},\ldots,v_{r})\in G$, $\forall g=(g_{1},g_{2},\ldots,g_{r})\in G$,
\begin{align*}
\sigma(\chi_{v}(g))
&=\sigma(\prod_{s=1}^{r}\chi_{v}^{(s)}(g_{s}))\tag{by (\ref{abelian})}
\\
&=\prod_{s=1}^{r}\sigma(\chi_{v}^{(s)}(g_{s}))\tag{$\sigma$ being an automorphism}
\\
&=\prod_{s=1}^{r}\sigma(\omega_{d_{s}}^{v_{s}g_{s}})\tag{by (\ref{cyclic})}
\\
&=\prod_{s=1}^{r}\omega_{d_{s}}^{v_{s}g_{s}l_{\sigma}}\tag{by (\ref{l})}
\\
&=\prod_{s=1}^{r}\omega_{d_{s}}^{v_{s}[\eta_{\sigma}(g)]_{s}}\tag{by (\ref{eta})}
\\
&=\prod_{s=1}^{r}\chi_{v}^{(s)}([\eta_{\sigma}(g)]_{s})\tag{by (\ref{cyclic})}
\\
&=\chi_{v}(\eta_{\sigma}(g))\tag{by (\ref{abelian})}
\end{align*}
and
\begin{align*}
\sigma(\chi_{v}(g))
&=\sigma(\prod_{s=1}^{r}\chi_{v}^{(s)}(g_{s}))\tag{by (\ref{abelian})}
\\
&=\prod_{s=1}^{r}\sigma(\chi_{v}^{(s)}(g_{s}))\tag{$\sigma$ being an automorphism}
\\
&=\prod_{s=1}^{r}\sigma(\omega_{d_{s}}^{v_{s}g_{s}})\tag{by (\ref{cyclic})}
\\
&=\prod_{s=1}^{r}\omega_{d_{s}}^{v_{s}g_{s}l_{\sigma}}\tag{by (\ref{l})}
\\
&=\prod_{s=1}^{r}\omega_{d_{s}}^{[\eta_{\sigma}(v)]_{s}g_{s}}\tag{by (\ref{eta})}
\\
&=\prod_{s=1}^{r}\chi_{\eta_{\sigma}(v)}^{(s)}(g_{s})\tag{by (\ref{cyclic})}
\\
&=\chi_{\eta_{\sigma}(v)}(g)\tag{by (\ref{abelian})},
\end{align*}
where $[\eta_{\sigma}(g)]_{s}$ and $[\eta_{\sigma}(v)]_{s}$ denote the $s$-th entries of $\eta_{\sigma}(g)$ and $\eta_{\sigma}(v)$, respectively.\qed
\end{proof}


\begin{lemma}\label{main}
Let $G$ be a finite abelian group. Let $K$ be a subfield of $\mathbb{Q}(\omega_{N})$. Set $H=\Gal(\mathbb{Q}(\omega_{N})|K)$. Let $a\in K[G]$. Then $\forall\chi\in\widehat{G}$, $\widehat{a}(\chi)\in K$ if and only if $\forall g\in G$, $\forall\sigma\in H$, $a(g)=a(\eta_{\sigma}(g))$.


\end{lemma}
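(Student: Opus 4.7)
The plan is to translate the statement ``$\widehat{a}(\chi)\in K$'' into an invariance condition via the Galois correspondence, then unwind the action of $\sigma\in H$ on $\widehat{a}(\chi)$ using Lemma~\ref{etasigma}, and finally appeal to injectivity of the Fourier transform (equation~(\ref{fourierinv})) to convert the resulting identity on Fourier side into the pointwise identity $a(g)=a(\eta_{\sigma}(g))$. First I would note that $\widehat{a}(\chi)=\sum_{g\in G}a(g)\overline{\chi(g)}$ lies in $\mathbb{Q}(\omega_{N})$, since $a(g)\in K\subseteq\mathbb{Q}(\omega_{N})$ and $\chi(g)$ is a root of unity whose order divides $N$. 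Because $\mathbb{Q}(\omega_{N})|\mathbb{Q}$ is a finite Galois extension and $K$ is an intermediate field, $\mathbb{Q}(\omega_{N})|K$ is also Galois; by (\ref{invgal}), $\Inv(H)=K$. Consequently $\widehat{a}(\chi)\in K$ for every $\chi\in\widehat{G}$ if and only if $\sigma(\widehat{a}(\chi))=\widehat{a}(\chi)$ for every $\chi\in\widehat{G}$ and every $\sigma\in H$.

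The central computation is the explicit evaluation of $\sigma(\widehat{a}(\chi))$ for $\sigma\in H$. Since $\sigma$ fixes $K$ elementwise, it fixes each $a(g)$, so
$$
\sigma(\widehat{a}(\chi))=\sum_{g\in G}a(g)\,\sigma(\overline{\chi(g)}).
$$
Using (\ref{negative}) I rewrite $\overline{\chi(g)}=\chi(-g)$, apply Lemma~\ref{etasigma} to get $\sigma(\chi(-g))=\chi(\eta_{\sigma}(-g))$, and then use that $\eta_{\sigma}$ is a group automorphism together with (\ref{negative}) once more to conclude
$$
\sigma(\widehat{a}(\chi))=\sum_{g\in G}a(g)\,\overline{\chi(\eta_{\sigma}(g))}.
$$
Changing the summation variable from $g$ to $\eta_{\sigma^{-1}}(g)$, which is a bijection of $G$, yields
$$
\sigma(\widehat{a}(\chi))=\sum_{g\in G}a(\eta_{\sigma^{-1}}(g))\,\overline{\chi(g)}=\widehat{a\circ\eta_{\sigma^{-1}}}(\chi).
$$

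With this identity in hand, the equivalence becomes transparent. The condition $\sigma(\widehat{a}(\chi))=\widehat{a}(\chi)$ for all $\chi\in\widehat{G}$ is the same as $\widehat{a\circ\eta_{\sigma^{-1}}}=\widehat{a}$; by Fourier inversion (\ref{fourierinv}) the Fourier transform is injective on $\mathbb{C}[G]$, so this forces $a\circ\eta_{\sigma^{-1}}=a$ pointwise, i.e. $a(\eta_{\sigma^{-1}}(g))=a(g)$ for every $g\in G$. Running this over all $\sigma\in H$ and using that $H$ is closed under inverses gives exactly $a(g)=a(\eta_{\sigma}(g))$ for all $g\in G$ and $\sigma\in H$. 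The converse direction is immediate by reading the same chain of equalities backward.

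The step I expect to require the most care is the manipulation of conjugates inside $\sigma$: verifying that $\sigma$ interacts correctly with the complex conjugate bar on $\overline{\chi(g)}$. Handling this via (\ref{negative}) rather than appealing to commutativity of $\sigma$ with complex conjugation on $\mathbb{Q}(\omega_{N})$ keeps the argument self-contained and makes the invocation of Lemma~\ref{etasigma} clean. Apart from this, every step is a direct unwinding of definitions and known facts in the Preliminaries.
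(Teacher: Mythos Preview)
Your proof is correct and takes a somewhat different, more streamlined route than the paper's. The paper treats the two implications separately: for necessity it applies $\sigma$ to the Fourier inversion formula for $a(g)$ and reads off $a(g)=a(\eta_{\sigma}(g))$ directly; for sufficiency it first decomposes $a=\sum_{j}k_{j}\delta_{H_{j}}$ along $H$-orbits and then checks by hand that $\sigma(\widehat{a}(\chi))=\widehat{a}(\chi)$ using that $\eta_{\sigma}$ permutes each orbit. You instead establish the single identity $\sigma(\widehat{a}(\chi))=\widehat{a\circ\eta_{\sigma^{-1}}}(\chi)$ for all $\sigma\in H$, and then both directions fall out simultaneously from injectivity of the Fourier transform and closure of $H$ under inversion. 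Your approach is shorter and conceptually cleaner, avoiding the explicit orbit decomposition; the paper's version, on the other hand, makes the orbit structure visible in the sufficiency direction, which dovetails with how orbits are used later in Lemma~\ref{orbitchar} and Theorem~\ref{th1deg}. The handling of the conjugate via (\ref{negative}) and Lemma~\ref{etasigma} that you flag as delicate is done the same way in both proofs.
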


\begin{proof}
We give the proof of the necessity first. Set $\sigma\in H$ and $g\in G$. Since $a(g)\in K$,
\begin{align*}
a(g)
&=\sigma(a(g))\tag{by (\ref{gal})}
\\
&=\sigma(\frac{1}{N}\sum_{v\in G}\widehat{a}(\chi_{v})\chi_{v}(g))\tag{by (\ref{fourierinv})}
\\&=\sigma(\frac{1}{N})\sum_{v\in G}\sigma(\widehat{a}(\chi_{v}))\sigma(\chi_{v}(g))\tag{$\sigma$ being an automorphism}
\\
&
=\frac{1}{N}\sum_{v\in G}\sigma(\widehat{a}(\chi_{v}))\sigma(\chi_{v}(g))\tag{$\frac{1}{n}\in\mathbb{Q}\subseteq K$}.
\end{align*}
Since $\forall\chi\in\widehat{G}$, $\widehat{a}(\chi)\in K$, we have 
\begin{align*}
\frac{1}{N}\sum_{v\in G}\sigma(\widehat{a}(\chi_{v}))\sigma(\chi_{v}(g))
&=\frac{1}{N}\sum_{v\in G}\widehat{a}(\chi_{v})\sigma(\chi_{v}(g))\tag{by (\ref{gal})}
\\&=\frac{1}{N}\sum_{v\in G}\widehat{a}(\chi_{v})\chi_{v}(\eta_{\sigma}(g))\tag{by Lemma \ref{etasigma}}
\\&=a(\eta_{\sigma}(g))\tag{by (\ref{fourierinv})}.
\end{align*}
Combining the above two equations, we have $a(g)=a(\eta_{\sigma}(g))$. This completes the proof of the necessity.

We give the proof of the sufficiency now. Let $\{H_{j}:j\in J\}$ be a complete set of distinct orbits of $G$ under $H$. Since $\forall g\in G$, $\forall\sigma\in H$, $a(g)=a(\eta_{\sigma}(g))$, set
\begin{equation}\label{proofmain1}
a=\sum_{j\in J}k_{j}\delta_{H_{j}},
\end{equation}
where $\forall j\in J$, $k_{j}\in K$. Then $\forall\sigma\in H$, $\forall\chi\in\widehat{G}$,
\begin{align*}
\sigma(\widehat{a}(\chi))&=\sigma(\sum_{g\in G}a(g)\overline{\chi(g)})\tag{by (\ref{fourierco})}
\\&=\sigma(\sum_{j\in J}\sum_{g\in H_{j}}a(g)\overline{\chi(g)})\tag{partitioning $G$ into orbits}
\\&=\sigma(\sum_{j\in J}\sum_{g\in H_{j}}k_{j}\overline{\chi(g)})\tag{by (\ref{proofmain1})}
\\&=\sum_{j\in J}\sum_{g\in H_{j}}\sigma(k_{j})\sigma(\overline{\chi(g)})\tag{$\sigma$ being an automorphism}
\\&=\sum_{j\in J}k_{j}\sum_{g\in H_{j}}\sigma(\overline{\chi(g)})\tag{by (\ref{gal})}
\\&=\sum_{j\in J}k_{j}\sum_{g\in H_{j}}\sigma(\chi(-g))\tag{by (\ref{negative})}
\\&=\sum_{j\in J}k_{j}\sum_{g\in H_{j}}\chi(\eta_{\sigma}(-g))\tag{by Lemma \ref{etasigma}}
\\&=\sum_{j\in J}k_{j}\sum_{g\in H_{j}}\overline{\chi(-\eta_{\sigma}(-g))}\tag{by (\ref{negative})}
\end{align*}
Recalling the definition of $\eta_{\sigma}$ in (\ref{eta}), we have $\eta_{\sigma}(g)=-\eta_{\sigma}(-g)$. Thus, $$\sum_{j\in J}k_{j}\sum_{g\in H_{j}}\overline{\chi(-\eta_{\sigma}(-g))}=\sum_{j\in J}k_{j}\sum_{g\in H_{j}}\overline{\chi(\eta_{\sigma}(g))}.$$
Since $\forall j\in J$, $H_{j}$ is an orbit and hence $\eta_{\sigma}$ permutes $H_{j}$, we have
\begin{equation}\label{lem25eq1}
\sum_{g\in H_{j}}\overline{\chi(\eta_{\sigma}(g))}=\sum_{g\in H_{j}}\overline{\chi(g)}.
\end{equation}
Thus,
\begin{align*}
\sum_{j\in J}k_{j}\sum_{g\in H_{j}}\overline{\chi(\eta_{\sigma}(g))}
&=\sum_{j\in J}k_{j}\sum_{g\in H_{j}}\overline{\chi(g)}\tag{by (\ref{lem25eq1})}
\\&=\sum_{j\in J}\sum_{g\in H_{j}}k_{j}\overline{\chi(g)}
\\&=\sum_{j\in J}\sum_{g\in H_{j}}a(g)\overline{\chi(g)}\tag{by (\ref{proofmain1})}
\\&=\sum_{g\in G}a(g)\overline{\chi(g)}\tag{uniting the orbits}
\\&=\widehat{a}(\chi)\tag{by (\ref{fourierco})}.
\end{align*}
Combining the above three equations, we have $\forall\sigma\in H$, $\forall\chi\in\widehat{G}$, $\sigma(\widehat{a}(\chi))=\widehat{a}(\chi)$. Thus, by $(\ref{inv})$ and $(\ref{invgal})$, $\forall\chi\in\widehat{G}$, $\widehat{a}(\chi)\in\Inv(H)=K$.\qed
\end{proof}
If $G$ is cyclic, Lemma \ref{main} is Theorem 4.2, together with Theorem 4.4 of \cite{aldeg2}, which generalizes the classification of all integral circulant graphs given by So in \cite{so}. In comparison to \cite{aldeg2}, we use the notion of Fourier coefficients instead of eigenvalues of Cayley digraphs. We refer the readers to \cite[Page~62, Theorem~5.4.10]{representationtheory} for the relation between Fourier coefficients and eigenvalues of Cayley digraphs.

\begin{lemma}\label{subgroup}
Let $G$ be a finite abelian group. Let $a\in\mathbb{Q}[G]$. Set $H=\{\sigma\in\Gal(\mathbb{Q}(\omega_{N})|\mathbb{Q}):\forall g\in G$, $a(g)=a(\eta_{\sigma}(g))\}$. Then $H$ is a subgroup of $\Gal(\mathbb{Q}(\omega_{N})|\mathbb{Q})$.
\end{lemma}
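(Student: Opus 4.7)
The plan is to verify the three subgroup axioms (non-emptiness, closure under products, closure under inverses) using the fact, recorded just after equation (\ref{eta}), that the map $\sigma \mapsto \eta_\sigma$ is a group action of $\Gal(\mathbb{Q}(\omega_N)|\mathbb{Q})$ on $G$. This is really the engine of the whole argument; everything else is a one-line verification.

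First I would observe that the identity $\mathrm{id} \in \Gal(\mathbb{Q}(\omega_N)|\mathbb{Q})$ lies in $H$, since $l_{\mathrm{id}} = 1$ by (\ref{l}), so $\eta_{\mathrm{id}}$ is the identity automorphism of $G$ by (\ref{eta}), and thus $a(g) = a(\eta_{\mathrm{id}}(g))$ trivially for every $g \in G$. In particular $H \neq \emptyset$.

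Next, for closure under products, take $\sigma, \tau \in H$ and any $g \in G$. Since $\eta$ is a group action, $\eta_{\sigma\tau} = \eta_\sigma \circ \eta_\tau$, and therefore
\[
a(\eta_{\sigma\tau}(g)) = a(\eta_\sigma(\eta_\tau(g))) = a(\eta_\tau(g)) = a(g),
\]
where the middle equality uses $\sigma \in H$ applied to the element $\eta_\tau(g) \in G$, and the last uses $\tau \in H$. Hence $\sigma\tau \in H$.

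For closure under inverses, let $\sigma \in H$ and $g \in G$. Set $g' = \eta_{\sigma^{-1}}(g)$. Applying $\sigma \in H$ to $g'$ gives $a(g') = a(\eta_\sigma(g'))$, and $\eta_\sigma(g') = \eta_\sigma(\eta_{\sigma^{-1}}(g)) = \eta_{\mathrm{id}}(g) = g$, so $a(\eta_{\sigma^{-1}}(g)) = a(g)$. Thus $\sigma^{-1} \in H$. This completes the subgroup test. There is no serious obstacle here — the only subtlety worth flagging is that the proof tacitly depends on the (already asserted) fact that $\sigma \mapsto \eta_\sigma$ is a homomorphism $\Gal(\mathbb{Q}(\omega_N)|\mathbb{Q}) \to \Aut(G)$; if one wanted a fully self-contained argument one could briefly verify this from the definitions (\ref{l}) and (\ref{eta}), noting that $l_{\sigma\tau} \equiv l_\sigma l_\tau \pmod{N}$ and then reducing componentwise modulo each $d_i$.
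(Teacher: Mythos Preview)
Your proof is correct and follows essentially the same approach as the paper: both rely on $\eta$ being a group action and verify the subgroup criterion via the identity element plus a closure computation. The only cosmetic difference is that the paper uses the one-step test (showing $\sigma_1\sigma_2^{-1}\in H$ directly) while you separate closure under products and inverses.
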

\begin{proof}
Trivially, the identity $id\in H$ and hence $H\neq\emptyset$. Let $\sigma_{1},\sigma_{2}\in H$. $\forall g\in G$,
$$a(g)=a(\eta_{id}(g))=a(\eta_{\sigma_{2}}(\eta_{\sigma_{2}^{-1}}(g))).$$
Set $g'=\eta_{\sigma_{2}^{-1}}(g)$. Then $$a(\eta_{\sigma_{2}}(\eta_{\sigma_{2}^{-1}}(g)))=a(\eta_{\sigma_{2}}(g')).$$
Since $\sigma_{2}\in H$,
$$a(\eta_{\sigma_{2}}(g'))=a(g').$$
Since $\sigma_{1}\in H$,
$$a(g')=a(\eta_{\sigma_{1}}(g'))=a(\eta_{\sigma_{1}}(\eta_{\sigma_{2}^{-1}}(g)))=a(\eta_{\sigma_{1}\sigma_{2}^{-1}}(g)).$$
Combining the above four equations, we have $a(g)=a(\eta_{\sigma_{1}\sigma_{2}^{-1}}(g))$. Hence $\sigma_{1}\sigma_{2}^{-1}\in H$,
which means that $H$
is a subgroup of $\Gal(\mathbb{Q}(\omega_{N})|\mathbb{Q})$.\qed
\end{proof}




\begin{lemma}\label{orbitchar}
Let $G$ be a finite abelian group. Let $a\in \mathbb{Q}[G]$. Set $H=\{\sigma\in\Gal(\mathbb{Q}(\omega_{N})|\mathbb{Q}):\forall g\in G,\;a(g)=a(\eta_{\sigma}(g))\}$. Then $\forall v\in G$, $\forall\sigma\in H$, $\widehat{a}(\chi_{v})=\widehat{a}(\chi_{\eta_{\sigma}(v)})$.


\end{lemma}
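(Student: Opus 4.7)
The plan is to start directly from the Fourier coefficient formula~(\ref{fourierco}) and use the invariance of $a$ under $\eta_\sigma$ (from the hypothesis $\sigma\in H$) together with the two identities from Lemma~\ref{etasigma} connecting $\chi_v\circ\eta_\sigma$ and $\chi_{\eta_\sigma(v)}$.

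Concretely, I would proceed as follows. Fix $v\in G$ and $\sigma\in H$. By~(\ref{fourierco}),
\[
\widehat{a}(\chi_v)=\sum_{g\in G}a(g)\overline{\chi_v(g)}.
\]
Since $\eta_\sigma\in\Aut(G)$ is a bijection of $G$, re-indexing the sum by $g\mapsto\eta_\sigma(g)$ preserves it, giving
\[
\widehat{a}(\chi_v)=\sum_{g\in G}a(\eta_\sigma(g))\overline{\chi_v(\eta_\sigma(g))}.
\]
Now use the hypothesis $\sigma\in H$ to replace $a(\eta_\sigma(g))$ by $a(g)$, and then invoke Lemma~\ref{etasigma} in the form $\chi_v(\eta_\sigma(g))=\chi_{\eta_\sigma(v)}(g)$ to rewrite the character factor. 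This collapses the right-hand side into $\sum_{g\in G}a(g)\overline{\chi_{\eta_\sigma(v)}(g)}$, which by~(\ref{fourierco}) is exactly $\widehat{a}(\chi_{\eta_\sigma(v)})$.

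There is essentially no hard step here: the whole argument is a three-line manipulation once one recognizes that the hypothesis defining $H$ is tailor-made to cancel the $\eta_\sigma$ inside $a$, while the second identity of Lemma~\ref{etasigma} is tailor-made to transfer the $\eta_\sigma$ from the argument of $\chi_v$ to its subscript. The only mild care required is the observation that $\eta_\sigma$ is a group automorphism (as recorded in~(\ref{eta}) and the subsequent remark that $\eta$ is a group action), which is what justifies the change of variable. Accordingly, I expect no substantive obstacle; the proof is a direct computation and, in particular, does not need to invoke Lemma~\ref{subgroup} (although one could equivalently run the argument by first using $\sigma^{-1}\in H$ after substituting $g'=\eta_\sigma(g)$).
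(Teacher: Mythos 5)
Your proposal is correct and is essentially the paper's own proof run in the opposite direction: the paper starts from $\widehat{a}(\chi_{\eta_{\sigma}(v)})$, applies (\ref{fourierco}), Lemma~\ref{etasigma}, the defining property of $H$, and the fact that $\eta_{\sigma}$ permutes $G$, arriving at $\widehat{a}(\chi_{v})$, while you traverse exactly the same chain of equalities starting from $\widehat{a}(\chi_{v})$. There is no substantive difference.
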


\begin{proof}
$\forall v\in G$, $\forall\sigma\in H$,
\begin{align*}
\widehat{a}(\chi_{\eta_{\sigma}(v)})
&=\sum_{g\in G}a(g)\overline{\chi_{\eta_{\sigma}(v)}(g)}\tag{by (\ref{fourierco})}
\\&=\sum_{g\in G}a(g)\overline{\chi_{v}(\eta_{\sigma}(g))}\tag{by Lemma \ref{etasigma}}
\\&=\sum_{g\in G}a(\eta_{\sigma}(g))\overline{\chi_{v}(\eta_{\sigma}(g))}\tag{by the definition of $H$}
\\&=\sum_{g\in G}a(g)\overline{\chi_{v}(g)}\tag{$\eta_{\sigma}$ permuting $G$}
\\&=\widehat{a}(\chi_{v})\tag{by (\ref{fourierco})}.
\end{align*}
This completes the proof. \qed
\end{proof}

\section{Algebraic degrees of $n$-Cayley digraphs over abelian groups}\label{semicayley}
In this section, we present our main results after introducing some lemmas.
\begin{lemma}\emph{(See\cite[Lemma~2]{tcayleycharacteristic})}\label{tcayleyrepresentation}
Let $\Gamma$ be a digraph. Let $G$ be a group. Then $\Gamma$ is an $n$-Cayley digraph over $G$ if and only if there exist subsets $S_{i,j}$ of $G$, where $1\leq i,j\leq n$, such that $\Gamma$ is isomorphic to the digraph $\Gamma'$ with
$$
V(\Gamma')=G\times\{1,2,\ldots,n\},\quad A(\Gamma')=\bigcup_{1\leq i,j\leq n}\{(g,i),(sg,j):g\in G,\;s\in S_{i,j}\}.
$$
\end{lemma}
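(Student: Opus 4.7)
The statement is an equivalence, so my plan is to prove the two directions separately, in the spirit of Sabidussi's classical theorem: once a semiregular action of $G$ is available, each orbit can be identified with $G$ by base-pointing, and the arcs between orbits then appear as explicit connection sets.

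For the sufficiency direction, I would exhibit on the combinatorial digraph $\Gamma'$ a semiregular subgroup of $\Aut(\Gamma')$ isomorphic to $G$ with exactly $n$ orbits. The natural choice is the left action $\rho_t(g,i) = (gt^{-1},i)$ for $t\in G$. The routine checks are that $t\mapsto\rho_t$ is a group homomorphism into $\sym_{V(\Gamma')}$, that each $\rho_t$ sends an arc $(g,i)\to(sg,j)$ to $(gt^{-1},i)\to(s(gt^{-1}),j)$ and so lies in $\Aut(\Gamma')$, that the action is semiregular because $gt^{-1}=g$ forces $t=e$, and that the orbits are the $n$ slices $G\times\{i\}$.

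For the necessity direction, I would reverse this construction. Let $H\leq\Aut(\Gamma)$ be an $n$-orbit semiregular subgroup with isomorphism $\pi\colon G\to H$, let $O_1,\ldots,O_n$ be the orbits of $H$ on $V(\Gamma)$, and fix a base vertex $v_i\in O_i$ for each $i$. Semiregularity makes $g\mapsto \pi(g)^{-1}(v_i)$ a bijection $G\to O_i$, and assembling these across $i$ gives a bijection $\Phi\colon G\times\{1,\ldots,n\}\to V(\Gamma)$. I would then read off the connection sets as
\[
S_{i,j} = \{s\in G : v_i\to \pi(s)^{-1}(v_j) \text{ is an arc of }\Gamma\}
\]
and verify that $\Phi$ is a graph isomorphism: since $\pi(g)\in\Aut(\Gamma)$, an arc $\Phi(g,i)\to\Phi(h,j)$ in $\Gamma$ is, after applying $\pi(g)$, equivalent to an arc $v_i\to\pi(hg^{-1})^{-1}(v_j)$ in $\Gamma$, which by definition says $hg^{-1}\in S_{i,j}$, i.e.\ $h=sg$ for some $s\in S_{i,j}$ --- exactly the combinatorial arc condition defining $\Gamma'$.

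The main obstacle I expect is fixing the correct conventions. For abelian $G$ the distinction is invisible, but for non-abelian $G$ one must choose between $\rho_t(g,i)=(tg,i)$ and $\rho_t(g,i)=(gt^{-1},i)$: only the latter preserves arcs of the specific shape $(g,i)\to(sg,j)$ without forcing each $S_{i,j}$ to be a union of conjugacy classes. Pairing this convention with the choice $\Phi(g,i)=\pi(g)^{-1}(v_i)$ in the other direction makes the two constructions inverses of each other, after which the remaining verifications reduce to direct substitution together with the fact that the elements of $H$ are automorphisms of $\Gamma$.
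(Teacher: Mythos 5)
Your argument is correct and is the standard Sabidussi-style proof: the paper itself does not reprove this lemma but simply cites Lemma~2 of Arezoomand--Taeri, and your two directions (right translations $\rho_t(g,i)=(gt^{-1},i)$ giving an $n$-orbit semiregular subgroup, and base-pointing each orbit via $\Phi(g,i)=\pi(g)^{-1}(v_i)$ to read off the connection sets $S_{i,j}$) are exactly the expected construction. Your attention to the left/right convention is also the right instinct, since it is what keeps the computation $hg^{-1}\in S_{i,j}$ consistent with the paper's later use of this decomposition in its Lemma on Cayley digraphs over subgroups.
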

Lemma \ref{tcayleyrepresentation} is a well-known structure representation of $n$-Cayley digraphs. By Lemma \ref{tcayleyrepresentation}, an $n$-Cayley digraph is characterized by a group $G$ and $n^{2}$ subsets $S_{i,j}$ of $G$. Hence, we denote an $n$-Cayley digraph by $\Gamma=\Cay(G;S_{i,j}|1\leq i,j\leq n)$. For convenience, when an $n$-Cayley digraph $\Gamma=\Cay(G;S_{i,j}|1\leq i,j\leq n)$ is given, we tacitly set $\Delta=[\delta_{-S_{j,i}}]_{n\times n}\in M_{n}(\mathbb{Q}[G])$, where $\delta_{-S_{j,i}}$ is the entry in the $i$-th row and the $j$-th column. By (\ref{betacontained}), $\forall k\in[n]$,
\begin{equation}\label{betarational}
\beta_{k}\langle\Delta\rangle\in\mathbb{Q}[G].
\end{equation}




The following result is a reduced version of \cite[Theorem~6]{tcayleycharacteristic}, which concerns $n$-Cayley digraphs over any finite groups. Here, we merely consider abelian groups.


\begin{lemma}\emph{(See\cite[Theorem~6]{tcayleycharacteristic})}\label{tcaypoly}
Let $\Gamma=\Cay(G;S_{i,j}|1\leq i,j\leq n)$ be an $n$-Cayley digraph over a finite abelian group $G$. Then $\mathcal{P}\langle\mathcal{A}(\Gamma)\rangle=\prod_{\chi\in\widehat{G}}\mathcal{P}\langle\widetilde{\D}(\chi)\rangle$.
\end{lemma}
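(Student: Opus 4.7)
The plan is to execute the standard block-diagonalization of $\mathcal{A}(\Gamma)$ by the Fourier transform over the abelian group $G$; this is essentially the substance of the cited \cite[Theorem~6]{tcayleycharacteristic}, but the argument is short enough to sketch.

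First, I would use Lemma \ref{tcayleyrepresentation} to replace $\Gamma$ by its canonical model with vertex set $G\times[n]$ and partition $\mathcal{A}(\Gamma)$ into an $n\times n$ array of $N\times N$ blocks $M_{i,j}$. Reading off the arcs shows each block has entries depending only on the group difference, namely $M_{i,j}(g,h)=\delta_{S_{i,j}}(h-g)$. Equivalently, $\mathcal{A}(\Gamma)$ is the image of $\Delta\in M_{n}(\mathbb{C}[G])$ under the entrywise extension of a suitable regular representation $\rho\colon\mathbb{C}[G]\to M_{N}(\mathbb{C})$; the paper's swap-and-negate convention $\Delta=[\delta_{-S_{j,i}}]$ is calibrated precisely so that this identification holds (up to a possible global transpose, which is harmless for characteristic polynomials).

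Second, I would diagonalize $\rho$ using the characters. Since $G$ is abelian, (\ref{convolution}) says that Fourier transform turns convolution into pointwise multiplication, so there is a unitary $U$ built from the character table with $U^{-1}\rho(a)U=\diag\!\bigl(\widehat{a}(\chi)\bigr)_{\chi\in\widehat{G}}$ for every $a\in\mathbb{C}[G]$. Applying $\bigoplus_{i=1}^{n}U$ to $\mathcal{A}(\Gamma)$ therefore simultaneously diagonalizes all $n^{2}$ blocks $M_{i,j}$ at once. After permuting coordinates so that the basis is grouped by character $\chi$ instead of by index $i$, the resulting $nN\times nN$ matrix is block-diagonal with one $n\times n$ block per $\chi\in\widehat{G}$, and this $\chi$-block is precisely $\widetilde{\D}(\chi)$.

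Third, since the characteristic polynomial of a block-diagonal matrix is the product of the characteristic polynomials of its blocks, this yields $\mathcal{P}\langle\mathcal{A}(\Gamma)\rangle=\prod_{\chi\in\widehat{G}}\mathcal{P}\langle\widetilde{\D}(\chi)\rangle$, as claimed. The main obstacle is the bookkeeping of conventions: verifying that with $\Delta=[\delta_{-S_{j,i}}]$ the $\chi$-block above really is $\widetilde{\D}(\chi)$ (and not its transpose or a permuted variant), and that the hidden index inversions in the regular representation cancel correctly. A purely algebraic alternative is to match the coefficients on both sides by combining (\ref{defcharacteristic}) with Corollary \ref{spm}, but this merely relocates, rather than eliminates, the bookkeeping.
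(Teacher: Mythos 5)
Your proof is correct, but note that the paper itself does not prove this lemma: it is quoted from \cite[Theorem~6]{tcayleycharacteristic}, and the only computation the authors supply is the notational check that $\widetilde{\D}(\chi)=[\sum_{g\in S_{j,i}}\chi(g)]_{i,j}$. Your block-diagonalization is essentially the argument of that cited reference, specialized to abelian $G$ where every irreducible representation is one-dimensional, so you are reconstructing the external proof rather than diverging from an internal one. The bookkeeping you flag does close correctly: from the arc set in Lemma \ref{tcayleyrepresentation}, the $(i,j)$ block of $\mathcal{A}(\Gamma)$ is $[\delta_{S_{i,j}}(h-g)]_{g,h}=\rho(\delta_{-S_{i,j}})$, where $\rho(a)=[a(g-h)]_{g,h}$ is the regular representation satisfying $\rho(a)\rho(b)=\rho(a*b)$ and $\rho(a)e_{\chi}=\widehat{a}(\chi)e_{\chi}$ for $e_{\chi}=(\chi(h))_{h\in G}$; conjugating by $\bigoplus_{i=1}^{n}U$ (with $U$ the character matrix) and regrouping the coordinates by $\chi$ produces the $\chi$-block $[\widehat{\delta_{-S_{i,j}}}(\chi)]_{i,j}=[\sum_{s\in S_{i,j}}\chi(s)]_{i,j}$, which is exactly the transpose of $\widetilde{\D}(\chi)$ as written in the paper and therefore has the same characteristic polynomial. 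With that one transposition accounted for, the product formula follows from the block-diagonal form as you say.
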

In \cite{tcayleycharacteristic}, $\widetilde{\D}(\chi)$ is written as $[\sum_{g\in S_{j,i}}\chi(g)]_{i,j}$. To dispel confusions, we show $\widetilde{\D}(\chi)=[\sum_{g\in S_{j,i}}\chi(g)]_{i,j}$ here.
\begin{align*}
\widetilde{\D}(\chi)&=[\widehat{\delta_{-S_{j,i}}}(\chi)]_{i,j}\tag{by (\ref{widetildedef})}
\\&=[\sum_{g\in G}\delta_{-S_{j,i}}(g)\overline{\chi(g)}]_{i,j}\tag{by (\ref{fourierco})}
\\&=[\sum_{g\in G}\delta_{-S_{j,i}}(g)\chi(-g)]_{i,j}\tag{by (\ref{negative})}
\\&=[\sum_{g\in G}\delta_{-S_{j,i}}(-g)\chi(g)]_{i,j}
\\&=[\sum_{g\in G}\delta_{S_{j,i}}(g)\chi(g)]_{i,j}
\\&=[\sum_{g\in S_{j,i}}\chi(g)]_{i,j}.
\end{align*}
Lemma \ref{tcaypoly} determines the characteristic polynomial of an $n$-Cayley digraph, which provides a preliminary characterization of the splitting field of an $n$-Cayley digraph $\Gamma$. Namely, if $K$ is the splitting field of $\Gamma$, then
\begin{align*}
K&=\mathbb{Q}(\mathcal{R}(\mathcal{P}\langle\mathcal{A}(\Gamma)\rangle))\tag{by definition}
\\&=\mathbb{Q}(\bigcup_{v\in G}\mathcal{R}(\mathcal{P}\langle\widetilde{\D}(\chi_{v})\rangle)),\tag{by Lemma \ref{tcaypoly}}
\end{align*}
which we label as
\begin{equation}\label{preliminarycharacterization}
K=\mathbb{Q}(\bigcup_{v\in G}\mathcal{R}(\mathcal{P}\langle\widetilde{\D}(\chi_{v})\rangle)).
\end{equation}

\begin{lemma}\label{th1onlyif}
Let $\Gamma=\Cay(G;S_{i,j}|1\leq i,j\leq n)$ be an $n$-Cayley digraph over a finite abelian group $G$. Let $K$ be a subfield of $\mathbb{C}$. If $\Gamma$ splits over $K$, then $\forall k\in[n]$, $\forall g\in G$, $\forall\sigma\in\Gal(\mathbb{Q}(\omega_{N})|K\cap\mathbb{Q}(\omega_{N}))$, $\beta_{k}\langle\D\rangle(g)=\beta_{k}\langle\D\rangle(\eta_{\sigma}(g))$.



\end{lemma}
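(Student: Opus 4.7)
The plan is to convert the splitting hypothesis on $\Gamma$ into a statement about the Fourier coefficients of the rational function $\beta_{k}\langle\Delta\rangle\in\mathbb{Q}[G]$, and then invoke Lemma \ref{main} with base field $K\cap\mathbb{Q}(\omega_{N})$ to read off the desired $\eta_{\sigma}$-invariance.

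The first step will be to apply Lemma \ref{tcaypoly} to factor $\mathcal{P}\langle\mathcal{A}(\Gamma)\rangle=\prod_{\chi\in\widehat{G}}\mathcal{P}\langle\widetilde{\Delta}(\chi)\rangle$. Since $\Gamma$ splits over $K$, every root of this product, and hence every root of each factor $\mathcal{P}\langle\widetilde{\Delta}(\chi)\rangle$, lies in $K$; the coefficients of each factor, being (signed) elementary symmetric polynomials in its roots, therefore also lie in $K$. By (\ref{defcharacteristic}) these coefficients equal $(-1)^{k}\beta_{k}\langle\widetilde{\Delta}(\chi)\rangle$, and Corollary \ref{spm} identifies the latter with $\widehat{\beta_{k}\langle\Delta\rangle}(\chi)$. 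Consequently $\widehat{\beta_{k}\langle\Delta\rangle}(\chi)\in K$ for all $k\in[n]$ and all $\chi\in\widehat{G}$. On the other hand, the entries of $\widetilde{\Delta}(\chi)$ are sums of values of $\chi$ and so lie in $\mathbb{Q}(\omega_{N})$, which forces $\beta_{k}\langle\widetilde{\Delta}(\chi)\rangle\in\mathbb{Q}(\omega_{N})$ as well. Combining the two containments yields $\widehat{\beta_{k}\langle\Delta\rangle}(\chi)\in K\cap\mathbb{Q}(\omega_{N})$ for every $k$ and every $\chi$.

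With this in hand I set $K'=K\cap\mathbb{Q}(\omega_{N})$, which is a subfield of $\mathbb{Q}(\omega_{N})$, and note that by (\ref{betarational}) we have $\beta_{k}\langle\Delta\rangle\in\mathbb{Q}[G]\subseteq K'[G]$. Applying Lemma \ref{main} to $a=\beta_{k}\langle\Delta\rangle$ with the field $K'$ will deliver $\beta_{k}\langle\Delta\rangle(g)=\beta_{k}\langle\Delta\rangle(\eta_{\sigma}(g))$ for every $g\in G$ and every $\sigma\in\Gal(\mathbb{Q}(\omega_{N})|K')=\Gal(\mathbb{Q}(\omega_{N})|K\cap\mathbb{Q}(\omega_{N}))$, which is exactly the desired conclusion. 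The main delicacy is the passage from splitting of the whole product $\mathcal{P}\langle\mathcal{A}(\Gamma)\rangle$ to $K$-rationality of the coefficients of each individual factor $\mathcal{P}\langle\widetilde{\Delta}(\chi)\rangle$: these factors are not a priori defined over $K$, but since each factor's roots form a subset of those of the product and the latter all lie in $K$, Vieta's formulas recover $K$-rationality of the coefficients. Everything after that is formal bookkeeping — identifying Fourier coefficients with principal minors via Corollary \ref{spm}, restricting scalars to $\mathbb{Q}(\omega_{N})$, and then reading off the orbit condition from Lemma \ref{main}.
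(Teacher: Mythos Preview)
Your proposal is correct and follows essentially the same route as the paper's proof: factor $\mathcal{P}\langle\mathcal{A}(\Gamma)\rangle$ via Lemma~\ref{tcaypoly}, use that each factor's roots lie in $K$ so its coefficients do (Vieta), identify these coefficients with $\widehat{\beta_{k}\langle\Delta\rangle}(\chi)$ via (\ref{defcharacteristic}) and Corollary~\ref{spm}, observe they also lie in $\mathbb{Q}(\omega_{N})$, and then apply Lemma~\ref{main} with base field $K\cap\mathbb{Q}(\omega_{N})$. The only difference is cosmetic: you spell out the Vieta step where the paper simply remarks that a polynomial splitting in a field has its coefficients in that field.
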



\begin{proof}
By definition, $\Gamma$ splitting over $K$ means that $\mathcal{P}\langle\mathcal{A}(\Gamma)\rangle$ splits over $K$. By Lemma \ref{tcaypoly}, $\forall\chi\in\widehat{G}$, $\mathcal{P}\langle\widetilde{\D}(\chi)\rangle$ splits over $K$. Note that if a polynomial splits in a field, then the coefficients of this polynomial are contained in this field. By (\ref{defcharacteristic}) and Corollary \ref{spm}, $\forall k\in[n]$, $\forall\chi\in\widehat{G}$,
\begin{equation}\label{th1onlyifeq1}
\widehat{\beta_{k}\langle\D\rangle}(\chi)\in K.
\end{equation}
By (\ref{cyclic}) and (\ref{abelian}), $\forall\chi\in\widehat{G}$,
\begin{equation}\label{th1onlyifeq3}
\chi\in\mathbb{Q}(\omega_{N})[G].
\end{equation}
Note that $\mathbb{Q}(\omega_{N})$ is a field and that $\mathbb{Q}(\omega_{N})$ is closed under complex conjugation. By (\ref{fourierco}), (\ref{betarational}) and (\ref{th1onlyifeq3}), $\forall k\in[n]$, $\forall\chi\in\widehat{G}$,
$$
\widehat{\beta_{k}\langle\D\rangle}(\chi)\in\mathbb{Q}(\omega_{N}).
$$
Recalling (\ref{th1onlyifeq1}), we have $\forall k\in[n]$, $\forall\chi\in\widehat{G}$,
$$
\widehat{\beta_{k}\langle\D\rangle}(\chi)\in K\cap\mathbb{Q}(\omega_{N}).
$$
By Lemma \ref{main}, $\forall k\in[n]$, $\forall g\in G$, $\forall\sigma\in\Gal(\mathbb{Q}(\omega_{N})|K\cap\mathbb{Q}(\omega_{N}))$,
$$\beta_{k}\langle\D\rangle(g)=\beta_{k}\langle\D\rangle(\eta_{\sigma}(g)).$$
This completes the proof.\qed

\end{proof}

Now we present our main results. We aim to compute a bound on the algebraic degree of an $n$-Cayley digraph $\Gamma$. (\ref{preliminarycharacterization}) already implies a bound on $\deg(\Gamma)$, that is,
$$
1\leq\deg(\Gamma)\leq(n!)^{N}.
$$
Nevertheless, we give a characterization of the splitting field of $\Gamma$ other than (\ref{preliminarycharacterization}) before computing the bound on $\deg(\Gamma)$.

\begin{theorem}\label{th1deg}
Let $\Gamma=\Cay(G;S_{i,j}|1\leq i,j\leq n)$ be an $n$-Cayley digraph over a finite abelian group $G$. Set $H=\{\sigma\in\Gal(\mathbb{Q}(\omega_{N})|\mathbb{Q}):\forall k\in[n],\;\forall g\in G,\;\beta_{k}\langle\D\rangle(g)=\beta_{k}\langle\D\rangle(\eta_{\sigma}(g))\}$ and $K_{0}=\Inv(H)$. Let $\{H_{j}:j\in J\}$ be the complete set of distinct orbits of $G$ under $H$ with the action being $\eta$. Let $\{v^{(j)}:j\in J\}$ be a complete set of representatives of $\{H_{j}:j\in J\}$. Then the splitting field of $\Gamma$ is $K_{0}(\bigcup_{j\in J}\mathcal{R}(\mathcal{P}\langle\widetilde{\D}(\chi_{v^{(j)}})\rangle))$ and the algebraic degree of $\Gamma$ is bounded by
$$
\frac{\phi(N)}{|H|}\leq\deg(\Gamma)\leq(n!)^{|J|}\frac{\phi(N)}{|H|}.
$$
\end{theorem}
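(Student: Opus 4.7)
The splitting field of $\Gamma$ is already characterized (by (\ref{preliminarycharacterization})) as $K = \mathbb{Q}\bigl(\bigcup_{v\in G}\mathcal{R}(\mathcal{P}\langle\widetilde{\D}(\chi_v)\rangle)\bigr)$. My strategy is to (a) show that only one representative from each $H$-orbit is needed inside this union, and (b) show that $K_0$ is automatically contained in $K$, so that the stated composite description holds. The degree bounds then follow from the multiplicativity formulas in Section~\ref{FG}.

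\textbf{Step 1: $K_0 \subseteq K$.} Apply Lemma~\ref{th1onlyif} to the splitting field $K$ itself: every $\sigma \in \Gal(\mathbb{Q}(\omega_N)|K\cap\mathbb{Q}(\omega_N))$ satisfies the defining condition of $H$, so $\Gal(\mathbb{Q}(\omega_N)|K\cap\mathbb{Q}(\omega_N)) \subseteq H$. By (\ref{inverse1}) and (\ref{invgal}), $K\cap\mathbb{Q}(\omega_N) = \Inv(\Gal(\mathbb{Q}(\omega_N)|K\cap\mathbb{Q}(\omega_N))) \supseteq \Inv(H) = K_0$, hence $K_0 \subseteq K$. (Before invoking Galois correspondence, one must note $H$ is a subgroup of $\Gal(\mathbb{Q}(\omega_N)|\mathbb{Q})$; this follows by applying Lemma~\ref{subgroup} to each $\beta_k\langle\D\rangle \in \mathbb{Q}[G]$ via (\ref{betarational}) and intersecting.)

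\textbf{Step 2: orbits of $H$ collapse roots.} For each $k \in [n]$, $\beta_k\langle\D\rangle \in \mathbb{Q}[G]$, so Lemma~\ref{orbitchar} gives $\widehat{\beta_k\langle\D\rangle}(\chi_v) = \widehat{\beta_k\langle\D\rangle}(\chi_{\eta_\sigma(v)})$ for every $\sigma \in H$ and $v \in G$. By Corollary~\ref{spm}, this reads $\beta_k\langle\widetilde{\D}(\chi_v)\rangle = \beta_k\langle\widetilde{\D}(\chi_{\eta_\sigma(v)})\rangle$, so by (\ref{defcharacteristic}) the polynomials $\mathcal{P}\langle\widetilde{\D}(\chi_v)\rangle$ and $\mathcal{P}\langle\widetilde{\D}(\chi_{\eta_\sigma(v)})\rangle$ coincide, and in particular have identical root sets. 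Consequently the union in (\ref{preliminarycharacterization}) only depends on the orbit representatives, giving $K = \mathbb{Q}\bigl(\bigcup_{j\in J}\mathcal{R}(\mathcal{P}\langle\widetilde{\D}(\chi_{v^{(j)}})\rangle)\bigr)$. Combining with Step~1 yields $K = K_0\bigl(\bigcup_{j\in J}\mathcal{R}(\mathcal{P}\langle\widetilde{\D}(\chi_{v^{(j)}})\rangle)\bigr)$, since one inclusion is immediate from $K_0\subseteq K$ and the other from enlarging the ground field from $\mathbb{Q}$ to $K_0$.

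\textbf{Step 3: degree bounds.} For the lower bound, $K_0 \subseteq K$ together with (\ref{bino}) gives $\deg(\Gamma) = [K:\mathbb{Q}] \geq [K_0:\mathbb{Q}]$, and by (\ref{degree}) and (\ref{phin}), $[K_0:\mathbb{Q}] = [\Inv(H):\mathbb{Q}] = |\Gal(\mathbb{Q}(\omega_N)|\mathbb{Q})|/|H| = \phi(N)/|H|$. For the upper bound, write $K = K_0 \cdot L$ where $L := \mathbb{Q}\bigl(\bigcup_{j\in J}\mathcal{R}(\mathcal{P}\langle\widetilde{\D}(\chi_{v^{(j)}})\rangle)\bigr)$. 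By (\ref{compositedegree}), $[K:\mathbb{Q}] \leq [K_0:\mathbb{Q}]\,[L:\mathbb{Q}]$. Each $\mathcal{P}\langle\widetilde{\D}(\chi_{v^{(j)}})\rangle$ has degree $n$, so by (\ref{splittingdegree}) its splitting field over $\mathbb{Q}$ has degree at most $n!$. Iterating (\ref{compositedegree}) over the $|J|$ representatives gives $[L:\mathbb{Q}] \leq (n!)^{|J|}$, and the upper bound $\deg(\Gamma) \leq (n!)^{|J|}\phi(N)/|H|$ follows.

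\textbf{Main obstacle.} None of the individual steps is hard once one has Lemma~\ref{th1onlyif}, Lemma~\ref{orbitchar}, and Corollary~\ref{spm}; the subtle point is the clean identification $K_0 \subseteq K$, which depends on the fact that $H$ was defined using \emph{every} coefficient $\beta_k\langle\D\rangle$ of the characteristic polynomials. If one defined $H$ orbit-by-orbit from individual entries of $\widetilde{\D}(\chi)$ instead of from the $\beta_k$, the converse implication in Lemma~\ref{th1onlyif} would fail and $K_0$ might overshoot $K$. The combinatorial bookkeeping to ensure this consistency, already handled in Lemma~\ref{th1onlyif}, is what makes the characterization tight.
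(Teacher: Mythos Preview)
Your Steps 1 and 2, and the lower bound in Step 3, are essentially identical to the paper's argument. The gap is in your upper bound.

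You invoke (\ref{splittingdegree}) over $\mathbb{Q}$ for each factor $\mathcal{P}\langle\widetilde{\D}(\chi_{v^{(j)}})\rangle$, but that inequality requires the polynomial to lie in $F[x]$ for the base field $F$. The coefficients of $\mathcal{P}\langle\widetilde{\D}(\chi_{v^{(j)}})\rangle$ are the Fourier coefficients $\widehat{\beta_k\langle\D\rangle}(\chi_{v^{(j)}})$, which live in $\mathbb{Q}(\omega_N)$, not in $\mathbb{Q}$ in general; so $[\,\mathbb{Q}(\mathcal{R}(\mathcal{P}\langle\widetilde{\D}(\chi_{v^{(j)}})\rangle)):\mathbb{Q}\,]\le n!$ is unjustified, and hence so is $[L:\mathbb{Q}]\le (n!)^{|J|}$. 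The case $n=1$ already shows the claim is false: there each $\mathcal{P}\langle\widetilde{\D}(\chi_{v^{(j)}})\rangle$ is linear, your bound would force $L=\mathbb{Q}$, yet $L=K_0$ can have degree $\phi(N)/|H|>1$.

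The paper avoids this by first using Lemma~\ref{main} (the converse direction, applied with $K=K_0$ and $H=\Gal(\mathbb{Q}(\omega_N)|K_0)$) to show that each $\widehat{\beta_k\langle\D\rangle}(\chi_{v^{(j)}})$ actually lies in $K_0$, so $\mathcal{P}\langle\widetilde{\D}(\chi_{v^{(j)}})\rangle\in K_0[x]$. Then (\ref{splittingdegree}) over $K_0$ gives $[K_0(\mathcal{R}(\mathcal{P}\langle\widetilde{\D}(\chi_{v^{(j)}})\rangle)):K_0]\le n!$, and iterating (\ref{compositedegree}) over $K_0$ yields $[K:K_0]\le (n!)^{|J|}$; multiplying by $[K_0:\mathbb{Q}]=\phi(N)/|H|$ via (\ref{bino}) finishes. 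Your decomposition $K=K_0\cdot L$ with bounds over $\mathbb{Q}$ cannot recover this without that extra input.
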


\begin{proof}
Let $K$ be the splitting field of $\Gamma$. The proof of $K=K_{0}(\bigcup_{j\in J}\mathcal{R}(\mathcal{P}\langle\widetilde{\D}(\chi_{v^{(j)}})\rangle))$ is given in 2 parts. Firstly, we prove that $K\supseteq K_{0}$. By Lemma \ref{th1onlyif}, $\forall k\in[n]$, $\forall g\in G$, $\forall\sigma\in\Gal(\mathbb{Q}(\omega_{N})|K\cap\mathbb{Q}(\omega_{N}))$, $\beta_{k}\langle\D\rangle(g)=\beta_{k}\langle\D\rangle(\eta_{\sigma}(g))$ and so
\begin{equation}\label{th1degeq1}
\Gal(\mathbb{Q}(\omega_{N})|K\cap\mathbb{Q}(\omega_{N}))\subseteq H.
\end{equation}
Then
\begin{align*}
K&\supseteq K\cap\mathbb{Q}(\omega_{N})
\\&=\Inv(\Gal(\mathbb{Q}(\omega_{N})|K\cap\mathbb{Q}(\omega_{N})))\tag{by (\ref{invgal})}
\\&\supseteq\Inv(H)\tag{by (\ref{inverse1}) and (\ref{th1degeq1})}
\\&=K_{0},
\end{align*}
which we label as
\begin{equation}\label{th1degeq7}
K\supseteq K_{0}.
\end{equation}

Secondly, we prove that $\bigcup_{v\in G}\mathcal{R}(\mathcal{P}\langle\widetilde{\D}(\chi_{v})\rangle)
=\bigcup_{j\in J}\mathcal{R}(\mathcal{P}\langle\widetilde{\D}(\chi_{v^{(j)}})\rangle)
$. $\forall k\in[n]$, $\forall j\in J$, $\forall v\in H_{j}$, since $v$ and $v^{(j)}$ are on the same orbit, we may set $\sigma_{0}\in H$ such that
\begin{equation}\label{th1degeq3}
v^{(j)}=\eta_{\sigma_{0}}(v),
\end{equation}
and then we have
\begin{align*}
\widehat{\beta_{k}\langle\D\rangle}(\chi_{v})
&=\widehat{\beta_{k}\langle\D\rangle}(\chi_{\eta_{\sigma_{0}}(v)})\tag{by (\ref{betarational}) and Lemma \ref{orbitchar}}
\\&=\widehat{\beta_{k}\langle\D\rangle}(\chi_{v^{(j)}})\tag{by (\ref{th1degeq3})}.
\end{align*}
By Corollary \ref{spm}, $\forall k\in[n]$, $\forall j\in J$, $\forall v\in H_{j}$,
$$
\beta_{k}\langle\widetilde{\Delta}(\chi_{v})\rangle=\beta_{k}\langle\widetilde{\Delta}(\chi_{v^{(j)}})\rangle.
$$
Hence, by (\ref{defcharacteristic}),
\begin{equation}\label{th1degeq6}
\mathcal{P}\langle\widetilde{\D}(\chi_{v})\rangle
=\mathcal{P}\langle\widetilde{\D}(\chi_{v^{(j)}})\rangle.
\end{equation}
Then we have
\begin{align*}
\bigcup_{v\in G}\mathcal{R}(\mathcal{P}\langle\widetilde{\D}(\chi_{v})\rangle)
&=\bigcup_{j\in J}\bigcup_{v\in H_{j}}\mathcal{R}(\mathcal{P}\langle\widetilde{\D}(\chi_{v})\rangle)\tag{partitioning $G$ into orbits}
\\&=\bigcup_{j\in J}\bigcup_{v\in H_{j}}\mathcal{R}(\mathcal{P}\langle\widetilde{\D}(\chi_{v^{(j)}})\rangle)\tag{by (\ref{th1degeq6})}
\\&=\bigcup_{j\in J}\mathcal{R}(\mathcal{P}\langle\widetilde{\D}(\chi_{v^{(j)}})\rangle),
\end{align*}
which we label as
\begin{equation}\label{th1degeq8}
\bigcup_{v\in G}\mathcal{R}(\mathcal{P}\langle\widetilde{\D}(\chi_{v})\rangle)
=\bigcup_{j\in J}\mathcal{R}(\mathcal{P}\langle\widetilde{\D}(\chi_{v^{(j)}})\rangle).
\end{equation}

Now we have a characterization of the splitting field of $\Gamma$.
\begin{align*}
K
&=\mathbb{Q}(\bigcup_{v\in G}\mathcal{R}(\mathcal{P}\langle\widetilde{\D}(\chi_{v})\rangle))\tag{by (\ref{preliminarycharacterization})}
\\&=K_{0}(\bigcup_{v\in G}\mathcal{R}(\mathcal{P}\langle\widetilde{\D}(\chi_{v})\rangle))\tag{by (\ref{th1degeq7})}
\\&=K_{0}(\bigcup_{j\in J}\mathcal{R}(\mathcal{P}\langle\widetilde{\D}(\chi_{v^{(j)}})\rangle)
)\tag{by (\ref{th1degeq8})}
\end{align*}
which we label as
\begin{equation}\label{th1degeq10}
K=K_{0}(\bigcup_{j\in J}\mathcal{R}(\mathcal{P}\langle\widetilde{\D}(\chi_{v^{(j)}})\rangle)
).
\end{equation}

Finally, we compute a bound on $\deg(\Gamma)$. First, we prove that $\frac{\phi(N)}{|H|}\leq\deg(\Gamma)$.
\begin{align*}
\deg(\Gamma)
&=[K:\mathbb{Q}]
\\&=[K:K_{0}][K_{0}:\mathbb{Q}]\tag{by (\ref{bino})}
\\&\geq[K_{0}:\mathbb{Q}]
\\&=[\Inv(H):\mathbb{Q}]
\\&=\frac{|\Gal(\mathbb{Q}(\omega_{N})|\mathbb{Q})}{|H|}\tag{by (\ref{degree})}
\\&=\frac{\phi(N)}{|H|}.\tag{by (\ref{phin})}
\end{align*}
By the way, note that
\begin{equation}\label{th1degeq11}
[K_{0}:\mathbb{Q}]=\frac{\phi(N)}{|H|}.
\end{equation}

Next, we prove that $\deg(\Gamma)\leq(n!)^{|J|}\frac{\phi(N)}{|H|}$. By the definition of $H$, $\forall k\in[n]$, $\forall g\in G$, $\forall\sigma\in H$,
$$
\beta_{k}\langle\Delta\rangle(g)=\beta_{k}\langle\Delta\rangle(\eta_{\sigma}(g)).
$$
Note that by (\ref{betarational}), $\beta_{k}\langle\Delta\rangle\in\mathbb{Q}[G]\subseteq K_{0}[G]$ and that by (\ref{invgal}), $H=\Gal(\mathbb{Q}(\omega_{N}|K_{0})$. Then by Lemma \ref{main}, $\forall k\in[n]$, $\forall v\in G$,
\begin{equation}\label{th1degeq14}
\widehat{\beta_{k}\langle\D\rangle}(\chi_{v})\in K_{0}.
\end{equation}
Therefore, $\forall v\in G$,
\begin{align*}
\mathcal{P}\langle\widetilde{\D}(\chi_{v})\rangle
&=\sum_{k=0}^{n}(-1)^{k}\beta_{k}\langle\widetilde{\D}(\chi_{v})\rangle x^{n-k}\tag{by (\ref{defcharacteristic})}
\\&=\sum_{k=0}^{n}(-1)^{k}\widehat{\beta_{k}\langle\D\rangle}(\chi_{v})x^{n-k}\tag{by Corollary \ref{spm}}
\\&\in K_{0}[x].\tag{by (\ref{th1degeq14})}
\end{align*}
By (\ref{splittingdegree}), $\forall v\in G$,
\begin{equation}\label{th1degeq15}
[K_{0}(\mathcal{R}(\mathcal{P}\langle\widetilde{\D}(\chi_{v})\rangle)):K_{0}]\leq n!.
\end{equation}
Then we have
\begin{align*}
[K:K_{0}]&=[K_{0}(\bigcup_{j\in J}\mathcal{R}(\mathcal{P}\langle\widetilde{\D}(\chi_{v}^{(j)})\rangle)):K_{0}]\tag{by (\ref{th1degeq10})}
\\&\leq\prod_{j\in J}[K_{0}(\mathcal{R}(\mathcal{P}\langle\widetilde{\D}(\chi_{v}^{(j)})\rangle)):K_{0}]\tag{by (\ref{compositedegree})}
\\&\leq\prod_{j\in J}n!\tag{by (\ref{th1degeq15})}
\\&=(n!)^{|J|},
\end{align*}
which we label as
\begin{equation}\label{th1degeq16}
[K:K_{0}]\leq(n!)^{|J|}.
\end{equation}
At last,
\begin{align*}
\deg(\Gamma)&=[K:\mathbb{Q}]
\\&=[K:K_{0}][K_{0}:\mathbb{Q}]\tag{by (\ref{bino})}
\\&=[K:K_{0}]\frac{\phi(N)}{|H|}\tag{by (\ref{th1degeq11})}
\\&\leq(n!)^{|J|}\frac{\phi(N)}{|H|}.\tag{by (\ref{th1degeq16})}
\end{align*}
This completes the proof.\qed

\end{proof}

\section{Applications}\label{applications}
In \cite{aldeg2}, M\"{o}nius obtained results on circulant graphs and asked whether their results could be extended to Cayley graphs over non-abelian groups. In \cite{lulu}, Lu et al. obtained results on Cayley graphs over abelian groups. In this section, we apply our results in Section \ref{semicayley} on Cayley digraphs over non-abelian groups.

The following result is a slight modification of \cite[Lemma 8]{tcayleycharacteristic}.

\begin{lemma}\label{abeliansub}\emph{(See \cite[Lemma 8]{tcayleycharacteristic})}
Let $\Gamma=\Cay(G_{0},S)$ be a Cayley digraph. Let $G$ be a subgroup of $G_{0}$ with index $n$. If $\{s_{1},s_{2},\ldots,s_{n}\}$ is a left transversal to $G$ in $G_{0}$, then $\Gamma\cong\Cay(G;S_{i,j}|1\leq i,j\leq n)$, where $S_{i,j}=\{g\in G: s_{j}gs_{i}^{-1}\in S\}$.
\end{lemma}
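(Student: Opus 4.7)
The plan is to exhibit an explicit isomorphism of digraphs using the coset decomposition afforded by the left transversal. Since $\{s_1,\ldots,s_n\}$ is a left transversal to $G$ in $G_0$, every element of $G_0$ can be written uniquely in the form $s_i g$ with $i\in[n]$ and $g\in G$. This yields a natural bijection
$$
\phi:G_0\longrightarrow G\times[n],\qquad s_i g\longmapsto(g,i).
$$
I would take this $\phi$ as the candidate isomorphism from $V(\Gamma)$ to $V(\Gamma')$, where $\Gamma'=\Cay(G;S_{i,j}\mid 1\leq i,j\leq n)$ with $S_{i,j}=\{g\in G:s_j g s_i^{-1}\in S\}$.

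The main step is then the verification that $\phi$ preserves arcs in both directions. Given $x=s_i g_1$ and $y=s_j g_2$ in $G_0$, there is an arc from $x$ to $y$ in $\Gamma$ precisely when $yx^{-1}\in S$, that is, when $s_j(g_2 g_1^{-1})s_i^{-1}\in S$. On the other hand, by Lemma \ref{tcayleyrepresentation}, there is an arc in $\Gamma'$ from $\phi(x)=(g_1,i)$ to $\phi(y)=(g_2,j)$ precisely when there exists $s\in S_{i,j}$ with $g_2=s g_1$, i.e.\ when $g_2 g_1^{-1}\in S_{i,j}$. By the very definition of $S_{i,j}$ these two conditions are equivalent, which gives the desired arc correspondence.

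Two small bookkeeping points deserve attention. First, the element $g_2 g_1^{-1}$ indeed lies in $G$ since $g_1,g_2\in G$, so the membership test ``$g_2 g_1^{-1}\in S_{i,j}$'' is meaningful even though the ambient group $G_0$ need not be abelian and $s_j(g_2 g_1^{-1})s_i^{-1}$ need not lie in $G$. Second, the asymmetry in the definition of $S_{i,j}$ (with $s_j$ on the left and $s_i^{-1}$ on the right) is precisely what is needed to match the direction of arcs: an arc \emph{from} part $i$ \emph{to} part $j$ in $\Gamma'$ corresponds to $yx^{-1}\in S$ with $x$ in coset $s_i G$ and $y$ in coset $s_j G$. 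I would make sure to align these indices carefully in the write-up.

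Overall, there is no real obstacle; the lemma is essentially a direct translation between the one-group description $\Cay(G_0,S)$ and the $n$-Cayley description obtained by identifying each coset $s_i G$ with a copy of $G$. The only place one could slip up is in the index convention for the arc set in Lemma \ref{tcayleyrepresentation}, so I would take care to match the convention ``$(g,i)\to(sg,j)$ for $s\in S_{i,j}$'' with the computation of $yx^{-1}$ above, and conclude that $\phi$ is a graph isomorphism, hence $\Gamma\cong\Gamma'=\Cay(G;S_{i,j}\mid 1\leq i,j\leq n)$.
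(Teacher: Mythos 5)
Your proposal is correct and matches the paper's own proof essentially verbatim: the same coset-decomposition bijection $s_i g\mapsto(g,i)$, the same arc-condition computation $s_j g_2 g_1^{-1}s_i^{-1}\in S\Leftrightarrow g_2g_1^{-1}\in S_{i,j}$, and the same index conventions. Nothing further is needed.
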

\begin{proof}
Set $\Sigma=\Cay(G;S_{i,j}|1\leq i,j\leq n)$. Since $\{s_{1},s_{2},\ldots,s_{n}\}$ is a left transversal to $G$ in $G_{0}$, every element of $G$ is uniquely expressible in the form $s_{i}g$ with $g\in G$ and $i\in[n]$. Set a mapping
\begin{align*}
\psi:&G_{0}\rightarrow G\times[n]
\\&s_{i}g\mapsto(g,i).
\end{align*}
$\psi$ is clearly a bijection. For $s_{i}g_{1},s_{j}g_{2}\in G_{0}$,
\begin{align*}
&(s_{i}g_{1},s_{j}g_{2})\in A(\Gamma)
\\\Leftrightarrow&s_{j}g_{2}g_{1}^{-1}s_{i}^{-1}\in S
\\\Leftrightarrow&g_{2}g_{1}^{-1}\in S_{i,j}
\\\Leftrightarrow&((g_{1},i),(g_{2},j))\in A(\Sigma).
\end{align*}
Therefore, $\psi$ is a graph isomorphism from $\Gamma$ to $\Sigma$. Hence, $\Gamma\cong\Sigma$.\qed
\end{proof}
By Lemma \ref{abeliansub}, Theorem \ref{degsemidirectproduct} is a direct result from Theorem \ref{th1deg}.
\begin{theorem}\label{degsemidirectproduct}
Let $\Gamma=\Cay(G_{0},S)$ be a Cayley digraph. Let $G$ be a subgroup of $G_{0}$ with index $n$. Let $\{s_{1},s_{2},\ldots,s_{n}\}$ be a left transversal to $G$ in $G_{0}$. For $i,j\in[n]$, set $S_{i,j}=\{g\in G: s_{j}gs_{i}^{-1}\in S\}$. Set $H=\{\sigma\in\Gal(\mathbb{Q}(\omega_{N})|\mathbb{Q}):\forall k\in[n],\;\forall g\in G,\;\beta_{k}\langle\D\rangle(g)=\beta_{k}\langle\D\rangle(\eta_{\sigma}(g))\}$ and $K_{0}=\Inv(H)$. Let $\{H_{j}:j\in J\}$ be the complete set of distinct orbits of $G$ under $H$ with the action being $\eta$. Let $\{v^{(j)}:j\in J\}$ be a complete set of representatives of $\{H_{j}:j\in J\}$. Then the splitting field of $\Gamma$ is $K_{0}(\bigcup_{j\in J}\mathcal{R}(\mathcal{P}\langle\widetilde{\D}(\chi_{v^{(j)}})\rangle))$ and the algebraic degree of $\Gamma$ is bounded by
$$
\frac{\phi(N)}{|H|}\leq\deg(\Gamma)\leq(n!)^{|J|}\frac{\phi(N)}{|H|}.
$$
\end{theorem}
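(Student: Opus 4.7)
The plan is to derive this theorem as a direct corollary of Theorem~\ref{th1deg}, using Lemma~\ref{abeliansub} to realize $\Cay(G_0,S)$ as an $n$-Cayley digraph over the subgroup $G$. Note that $G$ must be abelian for the symbols $\eta_\sigma$ and $\chi_{v^{(j)}}$ appearing in the conclusion to be well-defined, even though this hypothesis is not stated explicitly; it is implicit in the notation inherited from Section~\ref{semicayley}.

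First, I would apply Lemma~\ref{abeliansub} with the given left transversal $\{s_1,s_2,\ldots,s_n\}$ to obtain a graph isomorphism $\Gamma \cong \Cay(G;S_{i,j}\mid 1\le i,j\le n)$, where the connection sets $S_{i,j}=\{g\in G : s_j g s_i^{-1}\in S\}$ coincide with those in the hypothesis by construction. Since isomorphic digraphs have permutation-similar adjacency matrices, their characteristic polynomials agree, and consequently so do their splitting fields over $\mathbb{Q}$ and their algebraic degrees. Thus any spectral conclusion proved for the $n$-Cayley digraph transfers verbatim to $\Gamma$.

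Second, I would invoke Theorem~\ref{th1deg} on the $n$-Cayley digraph $\Cay(G;S_{i,j}\mid 1\le i,j\le n)$ over the finite abelian group $G$. The matrix $\Delta$, the subgroup $H\le\Gal(\mathbb{Q}(\omega_N)|\mathbb{Q})$, the fixed field $K_0=\Inv(H)$, the orbit decomposition $\{H_j:j\in J\}$, and the representatives $\{v^{(j)}:j\in J\}$ are defined in exactly the same way in the two theorem statements, so Theorem~\ref{th1deg} yields both the description of the splitting field as $K_0\bigl(\bigcup_{j\in J}\mathcal{R}(\mathcal{P}\langle\widetilde{\Delta}(\chi_{v^{(j)}})\rangle)\bigr)$ and the bound $\phi(N)/|H|\le\deg(\Gamma)\le (n!)^{|J|}\phi(N)/|H|$. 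No genuine obstacle arises; the only care required is verifying that the data feeding into Theorem~\ref{th1deg} is literally reproduced by Lemma~\ref{abeliansub}, which is immediate from how $S_{i,j}$ is defined, and checking that spectral invariants are preserved under the isomorphism $\psi$ constructed in the proof of Lemma~\ref{abeliansub}.
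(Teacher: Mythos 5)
Your proposal is correct and matches the paper exactly: the paper also derives this theorem as an immediate consequence of Lemma~\ref{abeliansub} (realizing $\Gamma$ as the $n$-Cayley digraph $\Cay(G;S_{i,j}\mid 1\le i,j\le n)$ over $G$) followed by Theorem~\ref{th1deg}. Your observation that $G$ must be abelian for the notation to make sense is a fair point about an implicit hypothesis the paper leaves unstated.
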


In particular, if $G_{0}=G$ is abelian, then the splitting field of $\Gamma$ is $K_{0}$ and $\deg(\Gamma)=\frac{\phi(N)}{|H|}$, which is the result on Cayley digraphs over abelian groups given in \cite{lulu}. We give an example in the following so as to illustrate how Theorem \ref{degsemidirectproduct} helps in considering splitting fields and algebraic degrees of Cayley digraphs.

\begin{example}
Notations in this example corresponds to those in Theorem \ref{degsemidirectproduct}. Let $\Gamma=\Cay(\mathbb{Z}_{7}\rtimes_{\varphi}\mathbb{Z}_{3},S)$ be a Cayley digraph. Set $G=\{(g,0):g\in\mathbb{Z}_{7}\}$. $\varphi:\mathbb{Z}_{3}\rightarrow\mathbb{Z}_{7}^{*}$ is a homomorphism, given by
\begin{align*}
\varphi_{k}:&\mathbb{Z}_{7}\rightarrow\mathbb{Z}_{7},
\\&g\mapsto2^{k}g,
\end{align*}
for $k\in\mathbb{Z}_{3}$. $S=\{(5,0),(6,0),(2,1),(3,1),(1,2),(4,2)\}$ is a subset of $\mathbb{Z}_{7}\rtimes_{\varphi}\mathbb{Z}_{3}$. Set $s_{1}=(0,1)$, $s_{2}=(0,2)$ and $s_{3}=(0,0)$. Then $\{s_{1},s_{2},s_{3}\}$ is a left transversal to $G$ in $\mathbb{Z}_{7}\rtimes_{\varphi}\mathbb{Z}_{3}$. By calculation,
\begin{equation*}
\Delta=
\begin{pmatrix}
&\delta_{\{1,4\}} &\delta_{\{3,5\}} & \delta_{\{2,6\}}\\
&\delta_{\{1,3\}} &\delta_{\{2,4\}} & \delta_{\{5,6\}}\\
&\delta_{\{3,6\}} &\delta_{\{4,5\}} & \delta_{\{1,2\}}
\end{pmatrix}.
\end{equation*}
Moreover, $\beta_{1}\langle\Delta\rangle=2\delta_{1,2,4}$, $\beta_{2}\langle\Delta\rangle=\delta_{3,5,6}-\delta_{1,2,4}$ and $\beta_{3}\langle\Delta\rangle=0$. Therefore $H=\{1,2,4\}$ and $\{H_{j}:j\in J\}=\{\{0\},\{1,2,4\},\{3,5,6\}\}$. And so by Theorem \ref{degsemidirectproduct}, we now have a bound on $\deg(\Gamma)$, that is,
$$2\leq\deg(\Gamma)\leq(3!)^{3}2.$$
Also by Theorem \ref{degsemidirectproduct}, the splitting field of $\Gamma$ is $$K=K_{0}(\mathcal{R}(\mathcal{P}\langle\widetilde{\Delta}(\chi_{0})\rangle),\mathcal{R}(\mathcal{P}\langle\widetilde{\Delta}(\chi_{1})\rangle),\mathcal{R}(\mathcal{P}\langle\widetilde{\Delta}(\chi_{6})\rangle)).$$ By calculation, $K=K_{0}(\omega_{7}+\omega_{7}^{2}+\omega_{7}^{4})$. Since $\forall\sigma\in H$, $\sigma(\omega_{7}+\omega_{7}^{2}+\omega_{7}^{4})=\omega_{7}+\omega_{7}^{2}+\omega_{7}^{4}$, we have $\omega_{7}+\omega_{7}^{2}+\omega_{7}^{4}\in K_{0}$, $K=K_{0}$ and so $\deg(\Gamma)=2$.

\end{example}


\end{document}